\theoremstyle{plain} 
\newtheorem{theorem}{\indent\sc Theorem}[section]
\newtheorem{lemma}[theorem]{\indent\sc Lemma}
\newtheorem{corollary}[theorem]{\indent\sc Corollary}
\newtheorem{proposition}[theorem]{\indent\sc Proposition}
\theoremstyle{definition} 
\newtheorem{definition}[theorem]{\indent\sc Definition}
\newtheorem{remark}[theorem]{\indent\sc Remark}
\numberwithin{equation}{section}
\DeclareMathOperator{\tr}{trace}
\DeclareMathOperator{\Hess}{Hess}
\DeclareMathOperator{\Ric}{Ric}
\DeclareMathOperator{\Rm}{Rm}
\DeclareMathOperator{\I}{Iso}
\def\address#1#2{\begingroup
\noindent\parbox[t]{7.8cm}{%
\small{\scshape\ignorespaces#1}\par\vskip1ex
\noindent\small{\itshape E-mail address}%
\/: #2\par\vskip4ex}\hfill%
\endgroup}%
\title{On isometry groups of gradient Ricci solitons} 
\author{
%
%
\textsc{Ha Tuan Dung and Hung Tran} 
}
\date{} 
\begin{document}

\maketitle

\footnote{ 
2010 \textit{Mathematics Subject Classification}.
Primary 32M05; Secondary 32H02}
\footnote{ 
\textit{Key words and phrases}.
Killing vector fields, Group actions, Gradient Ricci soliton, Bryant solitons , Isometric groups.
}
	\begin{abstract}
We give a result estimating the dimension of the Lie algebra of Killing vector fields on an irreducible non-trivial gradient Ricci soliton. Then we study the structure of this manifold when the maximal dimension is attained. There are local and global implications. 
	\end{abstract}
\section{Introduction}\label{s1}
\quad\quad Given an orientable connected $n$-dimensional manifold $M^n$ with a smooth one-parameter family of Riemannian metrics $g(t)$, the Ricci flow is the following
evolution equation
\begin{align}\label{e1.1}
	\frac{\partial g(t)}{\partial t}=-2 {\Ric}(t),
\end{align}
where $\Ric(t)$ is the Ricci curvature of $(M, g(t)).$ The theory was initiated and established in the 80s by Hamilton \cite{Ham1, Ham2, Ham3}. His works  are the premise for G. Perelman \cite{Pe1, Pe2, Pe3}
to eventually resolve the Poincar\'{e} conjecture and Thurston’s geometrization conjecture . Additionally, the Ricci flow is a powerful tool to solve fundamental problems in geometric analysis such as the Differentiable sphere theorem \cite{Br0, Br1}, a version for the curvature of the second kind \cite{CGT}, or the generalized Smale conjecture \cite{Ba1, Ba2}. Regarding the theory's recent developments, we refer the reader to the survey paper of R. Bamler \cite{Ba1} and the references therein.

A gradient Ricci soliton (GRS) is a Riemannian manifold $(M, g)$ that admits a smooth function $f: M \rightarrow \mathbb{R}$ such that
\begin{align}\label{e1.2}
	\Ric+\Hess f=\lambda g
\end{align}
where $\lambda\in\mathbb{R} $  and $\Hess f$ denotes the Hessian of $f$. GRS's correspond to self-similar solutions of Ricci flows and  
arise naturally as singularity models; thus, the study of GRS is of utmost important in the field. There have been extensive investigation on their topology, geometry, and analysis, see \cite{ Br3, C1, C2, CH, Chen, CM, DaW, EMC, Hung, I0, BKo, MS, MW2, Na, Pete1, Pete2}.

A GRS is said to be shrinking, steady, and expanding according to whether $\lambda>0, \lambda=0$, and $\lambda<0,$ respectively. Clearly, an Einstein manifold $N$ can be considered as a
special case of a gradient Ricci soliton where $\Hess f = 0$ and $\lambda$ becomes the Einstein constant. Another basic example is the Gaussian soliton $\left(\mathbb{R}^n, g_{\mathbb{R}^n}, \frac{\lambda|x|^2}{2}\right)$, followed by cylinders $\mathbb{S}^{k} \times \mathbb{R}^{n-k}$ with the product metric where the sphere has Ricci curvature $\lambda.$  Additionally, a combination of those two above is known as \textit{a rank $k$ rigid} gradient Ricci soliton, namely isometric to a  suitable quotient of $N^{k} \times \mathbb{R}^{n-k}$ with $f=\frac{|x|^2}{2}$ on the Euclidean factor \cite{Pe3}. Consequently, a soliton is called \textit{non-trivial} (or \textit{non-rigid}) if at least a factor
in its de Rham decomposition is non-Einstein.

On the other hand, the study of isometric groups plays a pivotal role in classifying the geometric structure of smooth manifolds. Almost a century ago, Dantzig and Waerden \cite{DWa} proved that the group of isometries of a connected, locally compact metric space is locally compact
with respect to the compact-open topology. Later, Myers and Steenrod in the seminal paper \cite{MyS} showed that the group of isometries $\I (M)$ of a Riemannian manifold
$M$ is a Lie transformation group with respect to the compact-open topology. Using the concept of $G$-structure in \cite{MyS}, Kobayashi \cite{Ko3} gave a different proof for the above result of Myers and Steenrod. In particular, Kobayashi proved that there exists a natural
embedding of the isometry group $\I (M)$ into the orthonormal frame bundle $O(M)$ of $M$ such
that $\I (M)$ becomes a closed submanifold of $O (M)$ \cite[page 41]{Ko3}. From this result, he showed that the dimension of
the group of isometries is at most $\frac{1}{2}n(n+1)$ and it is attained if and only if the manifold is of constant curvature \cite{Ko3}. 

While non-gradient Ricci solitons exist in many Lie groups and homogeneous spaces \cite{Bai, Lau}, P. Petersen and W. Wylie \cite{Pete2} remarkably showed that all homogeneous GRS are rigid. Moreover, the authors also proved that if the Riemannian metric is reducible then the
soliton structure is reduced accordingly. Their result is based on the existence of
splitting results induced by Killing vector fields. Recently, the second author \cite{Hung} has demonstrated some results about the group of isometries of an irreducible non-trivial K\"{a}hler GRS.

Inspired by their work, we will study the isometry group $\I (M)$ and its Lie algebra of an irreducible non-trivial gradient Ricci soliton $(M, g, f).$  Indeed, we will determine the maximal dimension and characterize the case when it's attained. Towards that end, we recall
\[\mathfrak{iso}(M, g):= \{X \text{ is a smooth tangent vector field on $M$}, \mathcal{L}_X g=0\}.\]
Closely related is the Lie algebra of Killing vector fields preserving $f$:
\[ \mathfrak{iso}_f(M, g, f):= \{X \text{ is a smooth tangent vector field on $M$}, \mathcal{L}_X g=0=\mathcal{L}_X f\}.\]
Our first result investigates the maximal dimension of $\mathfrak{iso}_f(M, g, f)$.
 \begin{theorem}\label{main1}
		Let $(M^n, g, f),$ with $n\geq 3,$ be a GRS. If $f$ is non-constant then $\mathfrak{iso}_f(M, g, f)$ is of dimension at
		most $\frac{1}{2}\left(n-1\right)n$ and equality happens iff each connected component of a regular level set of $f$ is a space of constant curvature. 
		
		Let $(\mathbb{N}^{n-1}, g_{\mathbb{N}})$ denote the space form model. If $g_\mathbb{N}$ is non-flat, the equality happens iff the metric is locally a warped product. That is, there is an open dense subset such that around each of its points, there is a neighborhood diffeomorphic to a product $I \times \mathbb{N}$ and the metric $g$ is given by
		\begin{align}\label{1.1}
			g=d t^2+F^2(t) g_{\mathbb{N}}.
		\end{align}
	Here, $I$ is an open interval and $F: I \mapsto \mathbb{R}^{+}$ is a smooth function. 
\end{theorem}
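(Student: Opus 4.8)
The plan is to relate $\mathfrak{iso}_f(M,g,f)$ to the isometry algebra of a regular level hypersurface, run the classical bound there, and then feed the equality case back into the soliton equation~\eqref{e1.2}. Since $f$ is non-constant, Sard's theorem gives a non-empty open set $M_{\mathrm{reg}}=\{p:df_p\neq0\}$ and a regular value; fix a connected component $\Sigma_0$ of a regular level set, an $(n-1)$-manifold. Every $X\in\mathfrak{iso}_f$ satisfies $\langle X,\nabla f\rangle=Xf=0$, hence is tangent to all level sets, and its restriction to $\Sigma_0$ is a Killing field of $(\Sigma_0,g|_{\Sigma_0})$ because the local flow of $X$ preserves both $\Sigma_0$ and $g$. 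The resulting linear map $\rho\colon\mathfrak{iso}_f(M,g,f)\to\mathfrak{iso}(\Sigma_0,g|_{\Sigma_0})$ is injective: if $X|_{\Sigma_0}=0$ then, using that $\nabla X$ is skew-symmetric and $\Sigma_0$ has codimension one, one gets $(\nabla X)|_{\Sigma_0}=0$ as well, and a Killing field whose $1$-jet vanishes at a point of the connected manifold $M$ vanishes identically. Hence $\dim\mathfrak{iso}_f(M,g,f)\le\dim\mathfrak{iso}(\Sigma_0,g|_{\Sigma_0})\le\tfrac12(n-1)n$ by the Myers--Steenrod--Kobayashi estimate on the $(n-1)$-manifold $\Sigma_0$. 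If equality holds in the outer terms, then $\dim\mathfrak{iso}(\Sigma_0)=\tfrac12(n-1)n$ for \emph{every} component of \emph{every} regular level set, and the equality case of that estimate forces each such $\Sigma_0$ to have constant sectional curvature.

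Conversely, suppose every regular level set component has constant curvature. Working on $M_{\mathrm{reg}}$ with $\nu:=\nabla f/|\nabla f|$, the soliton equation $\Ric+\Hess f=\lambda g$ gives the second fundamental form of a level hypersurface $\Sigma$: for $X,Y\in T\Sigma$, $\mathrm{II}(X,Y)=\langle\nabla_X\nu,Y\rangle=\tfrac{1}{|\nabla f|}\bigl(\lambda\,g(X,Y)-\Ric(X,Y)\bigr)$. Combining the Codazzi equation on $\Sigma$, the GRS identity $\Rm(X,Y,Z,\nabla f)=(\nabla_Y\Ric)(X,Z)-(\nabla_X\Ric)(Y,Z)$, the Gauss equation together with the hypothesis that $\Sigma$ has constant curvature, and Hamilton's identity that $|\nabla f|^2+R-2\lambda f$ is constant (so $|\nabla f|^2+R$ is constant on $\Sigma$), one deduces that $\mathrm{II}$ is umbilic, $\mathrm{II}=\sigma\,g|_\Sigma$, and that $|\nabla f|$, hence $\sigma$ and $R$, is constant along each $\Sigma$. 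Then $\nabla R\parallel\nabla f$, so $\Ric(\nu)\parallel\nu$ and $\nabla_\nu\nu\parallel\nu$; since $|\nu|\equiv1$ the integral curves of $\nu$ are unit-speed geodesics meeting the level sets orthogonally. In the associated Fermi coordinates $M_{\mathrm{reg}}$ is locally $I\times\Sigma_0$ with $g=dt^2+h(t)$ and $\partial_t=\nu$, and $\partial_t h=2\,\mathrm{II}=2\sigma(t)\,h$; integrating, $h(t)=F^2(t)\,g_{\mathbb N}$ with $F=\exp\!\int\sigma>0$ and $g_{\mathbb N}:=g|_{\Sigma_0}$ of constant curvature. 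This is the warped product~\eqref{1.1}, and in the non-flat case $g_{\mathbb N}$ is locally the model space form.

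It remains to recover maximality of the dimension. The set $M_{\mathrm{reg}}$ is dense: if $df$ vanished on an open set, $\eqref{e1.2}$ would make $g$ Einstein there, hence---GRS metrics being real-analytic---Einstein everywhere, so $\nabla f$ would be parallel and, $f$ being non-constant, nowhere zero, a contradiction; thus $M_{\mathrm{reg}}$ is the asserted open dense set. On a warped product $dt^2+F^2(t)g_{\mathbb N}$ with $g_{\mathbb N}$ the model space form, each of the $\tfrac12(n-1)n$ Killing fields of $(\mathbb N^{n-1},g_{\mathbb N})$, viewed as a $t$-independent vector field, is Killing on the chart and annihilates $t$, hence $f$; by real-analyticity these germs continue to global Killing fields of $M$ lying in $\mathfrak{iso}_f(M,g,f)$ and stay linearly independent, so $\dim\mathfrak{iso}_f=\tfrac12(n-1)n$. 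Combined with the first paragraph, this gives both equivalences.

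\textbf{Main obstacle.} The crux is the rigidity step in the second paragraph: extracting umbilicity of the fibers and the constancy of $|\nabla f|$ along them purely from ``fibers of constant curvature'' plus~\eqref{e1.2}, which requires a careful simultaneous use of the Gauss and Codazzi equations together with the GRS curvature identities (and the dimension hypothesis $n-1\ge2$). A second delicate point is the global continuation of the fiber Killing fields: real-analyticity of the GRS metric makes each germ extend along paths, and one must check single-valuedness on $M$ (immediate when $M$ is simply connected, as for the standard non-trivial models), the flat case being the one to handle with care.
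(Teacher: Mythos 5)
Your first paragraph is sound and coincides with the paper's argument: restriction to a regular level component is an injective linear map into its Killing algebra (your observation that skew-symmetry of $\nabla X$ kills the normal derivative once the tangential ones vanish is a clean substitute for the paper's use of $[X,\nabla f]=0$), and Kobayashi's bound on the $(n-1)$-manifold yields both the estimate and the implication ``equality $\Rightarrow$ constant curvature fibers''.

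The gap is exactly the step you flag as the crux. You attempt to extract umbilicity of the regular fibers and the constancy of $|\nabla f|$ along them from the hypothesis ``fibers of constant curvature'' alone, via Gauss, Codazzi and the soliton Bianchi identity; but ``one deduces'' is never substantiated, and it is far from clear the computation closes (the Gauss equation with a constant-curvature fiber constrains only certain quadratic combinations of the shape operator, not its full trace-free part, and constancy of $|\nabla f|$ on a fiber is an additional fact you have not derived). The paper reaches the warped product by a different route that is available to you but that you do not use: in the equality case the $\tfrac12 n(n-1)$-dimensional algebra acts transitively on each regular fiber, so every isometry invariant --- in particular $S$, hence by $S+|\nabla f|^2-2\lambda f=\mathrm{const}$ also $|\nabla f|$ --- is constant on each fiber; this already gives $g=dt^2+g_t$ with $dt=df/|\nabla f|$, and then homogeneity annihilates all spatial derivatives in the linearized formula for $\partial_t\Ric(g_t)$, reducing it to the algebraic curvature operator applied to $\nu=\partial_t g_t$; since $\Ric(g_t)$ is a multiple of $g_t$ for every $t$, so is its $t$-derivative, and for $\Rm(g_{\mathbb N})\neq0$ this forces $\nu$ to be proportional to $g_t$, i.e.\ the warped product. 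You should replace your second paragraph by this symmetry argument (or actually carry out the Gauss--Codazzi computation, which is a nontrivial piece of work, not a remark). Separately, your third paragraph rests on extending the $\tfrac12 n(n-1)$ fiber Killing germs to global elements of $\mathfrak{iso}_f(M,g,f)$ by analytic continuation; you acknowledge but do not resolve the monodromy obstruction, and since $M$ is not assumed simply connected this direction remains open in your write-up (the paper sidesteps it by treating the statement as essentially local and only arguing from the equality hypothesis).
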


Furthermore, it is possible to relax the assumption on preserving $f$. A Riemannian manifold is locally irreducible if it is not a local Riemannian product metric around each point.
\begin{theorem}\label{main2}
Let $(M^n, g, f),$ with $n\geq 3,$ be a locally irreducible non-trivial GRS. Then $\mathfrak{iso}(M, g)$ is of dimension at most $\frac{1}{2}\left(n-1\right)n$. In addition, equality happens iff it is smoothly constructed as in the case of equality of Theorem \ref{main1}.
\end{theorem}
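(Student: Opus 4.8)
The plan is to deduce Theorem \ref{main2} from Theorem \ref{main1} by measuring how far a Killing field can be from preserving $f$. First, $f$ must be non-constant: otherwise $\Ric=\lambda g$, so $M$ is Einstein, hence a rigid (trivial) GRS, contrary to hypothesis; thus Theorem \ref{main1} applies. Now let $X\in\mathfrak{iso}(M,g)$. Its flow consists of isometries, so it preserves the Levi-Civita connection; consequently $\mathcal{L}_X$ commutes with $\nabla$ on tensors, whence $\mathcal{L}_X\Ric=0$ and $\mathcal{L}_X\Hess f=\nabla^2(\mathcal{L}_X f)=\nabla^2(Xf)$. Applying $\mathcal{L}_X$ to \eqref{e1.2} gives $\nabla^2(Xf)=0$, i.e.\ $\nabla(Xf)$ is a parallel vector field. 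Local irreducibility of $M$ and the de Rham local splitting theorem force $\nabla(Xf)\equiv0$, so $Xf$ equals a constant $\ell(X)$. The map $X\mapsto\ell(X)$ is a linear functional $\ell\colon\mathfrak{iso}(M,g)\to\RR$ with kernel $\mathfrak{iso}_f(M,g,f)$, so Theorem \ref{main1} gives $\dim\mathfrak{iso}(M,g)\le\dim\mathfrak{iso}_f(M,g,f)+1\le\frac{1}{2}(n-1)n+1$. If $\ell\equiv0$ then $\mathfrak{iso}(M,g)=\mathfrak{iso}_f(M,g,f)$ and Theorem \ref{main1} yields the conclusion verbatim. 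So the whole point is to show that if $\ell\not\equiv0$ then in fact $\dim\mathfrak{iso}(M,g)<\frac{1}{2}(n-1)n$.

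Assume $\ell(X_0)=c\ne0$ for some $X_0$, with flow of isometries $\phi_s$, so $f\circ\phi_s=f+cs$. A GRS satisfies $\Scal+|\nabla f|^2-2\lambda f\equiv\mathrm{const}$ (differentiate \eqref{e1.2} and use the contracted second Bianchi identity); pulling this back by $\phi_s$ and using $\Scal\circ\phi_s=\Scal$ and $|\nabla f|^2\circ\phi_s=|\nabla(f\circ\phi_s)|^2=|\nabla f|^2$ yields $2\lambda cs\equiv0$, hence $\lambda=0$: only steady solitons can have $\ell\ne0$. Suppose now, for contradiction, that $\dim\mathfrak{iso}(M,g)\ge\frac{1}{2}(n-1)n$; then $\dim\mathfrak{iso}_f(M,g,f)\ge\frac{1}{2}(n-1)n-1$. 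Let $(\Sigma^{n-1},h)$ be a connected component of a regular level set of $f$ with its induced metric. Restriction embeds $\mathfrak{iso}_f(M,g,f)$ into $\mathfrak{iso}(\Sigma^{n-1},h)$: a Killing field of $M$ preserving $f$ is tangent to the level sets, and if its $1$-jet vanishes along $\Sigma$ it vanishes identically. Hence $\dim\mathfrak{iso}(\Sigma^{n-1},h)\ge\frac{1}{2}(n-1)n-1$. By the classical gap theorem for isometry-group dimensions (Wang; see Kobayashi's book), no connected Riemannian $m$-manifold has isometry algebra of dimension strictly between $\frac{1}{2}m(m-1)+1$ and $\frac{1}{2}m(m+1)$; with $m=n-1$ the number $\frac{1}{2}(n-1)n-1$ lies strictly in this open interval (for $n=3$ one argues directly that a surface with $\dim\mathfrak{iso}\ge2$ is of constant curvature, and for $n=5$ one notes $9$ is not the exceptional value $8$). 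Therefore $\dim\mathfrak{iso}(\Sigma^{n-1},h)=\frac{1}{2}(n-1)n$, so $\Sigma^{n-1}$ has constant curvature; by Theorem \ref{main1} this forces $\dim\mathfrak{iso}_f(M,g,f)=\frac{1}{2}(n-1)n$, hence $\dim\mathfrak{iso}(M,g)=\frac{1}{2}(n-1)n+1$ and, again by Theorem \ref{main1}, $M$ is locally the warped product $g=dt^2+F^2(t)g_{\mathbb{N}}$ over a space form.

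To finish, work in such coordinates, where $f=f(t)$ since the slices $\{t=\mathrm{const}\}$ are the level sets and $f'\ne0$ on the regular part. Writing $X_0=a\,\partial_t+(\text{tangential})$, the $(\partial_t,\partial_t)$-component of $\mathcal{L}_{X_0}g=0$ gives $\partial_t a=0$, while $X_0 f=af'(t)=c$ gives $a=c/f'(t)$; hence $a$ is a nonzero constant and $f$ is affine in $t$, so $f''=0$. Substituting $f''=0$ and $\lambda=0$ into the warped-product soliton equations, the $(\partial_t,\partial_t)$-equation gives $F''=0$, and then the fibre equation reads $(n-2)\kappa-(n-2)(F')^2+FF'f'=0$ with $F$ affine and $f'$ a nonzero constant, which forces the coefficient of $t$ to vanish, i.e.\ $F'\equiv0$. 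Thus $g=dt^2+F^2 g_{\mathbb{N}}$ is a Riemannian product, contradicting local irreducibility. (Alternatively, one may bypass this computation by invoking the classical classification: a connected $n$-manifold with $\dim\mathfrak{iso}=\frac{1}{2}n(n-1)+1$ is locally a Riemannian product of a line or a circle with an $(n-1)$-dimensional space form, hence locally reducible; this also covers the case of flat level sets, where Theorem \ref{main1} is stated without the warped-product conclusion.) This contradiction shows $\dim\mathfrak{iso}(M,g)<\frac{1}{2}(n-1)n$ when $\ell\ne0$; combined with the first paragraph, $\dim\mathfrak{iso}(M,g)\le\frac{1}{2}(n-1)n$ always, with equality only if $\ell\equiv0$, in which case Theorem \ref{main1} provides the stated characterization. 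The converse — that every manifold arising in the equality case of Theorem \ref{main1} attains the bound — is immediate, since there $\dim\mathfrak{iso}_f(M,g,f)$ already equals $\frac{1}{2}(n-1)n$.

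The main obstacle is exactly the one forced by the off-by-one in the elementary bound: Killing fields need not preserve $f$, and one must rule out the coexistence of an ``extra'' Killing field with the maximal symmetry of the level sets. This is where the soliton equation does the real work — first to eliminate the shrinking and expanding cases, then, via the warped-product ODEs (or the classical classification of almost-maximally-symmetric manifolds), to eliminate the steady case. A subsidiary technical point, which should be acknowledged, is the flat-level-set situation, handled either by repeating the ODE computation with $\kappa=0$ or by the reducibility classification.
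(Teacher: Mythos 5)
Your reduction to Theorem \ref{main1} begins exactly as the paper does: applying $\mathcal{L}_X$ to the soliton equation to get $\nabla^2(Xf)=0$ and using local irreducibility to kill the parallel field $\nabla(Xf)$ is the paper's use of Petersen--Wylie (Proposition \ref{p2}, Lemma \ref{l2.17}), and your flow argument with $S+|\nabla f|^2-2\lambda f=C$ forcing $\lambda=0$ when $\ell\not\equiv0$ is the second half of Proposition \ref{p2}. Where you genuinely diverge is the steady case. The paper's device is to replace $f$ by the scalar curvature $S$: for a non-trivial steady soliton, $S$ is non-constant by \eqref{p2.3} and is automatically preserved by every isometry, so $\mathfrak{iso}(M,g)=\mathfrak{iso}_S(M,g,f)$ and the entire Theorem \ref{main1} argument reruns verbatim on regular level sets of $S$ --- the off-by-one never appears. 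You instead keep $f$, accept $\dim\mathfrak{iso}\le\dim\mathfrak{iso}_f+1$, and kill the extra dimension by hand via the gap theorem on the level sets and a warped-product ODE computation; that computation (constancy of $a$ and $f'$, then $F''=0$ and $\gamma\alpha=0$ forcing $F'\equiv0$) is correct and does contradict irreducibility.

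Two soft spots remain in your route. First, in the sub-case $\ell\not\equiv0$ with $\dim\mathfrak{iso}=\frac{1}{2}(n-1)n$ exactly, you only know $\dim\mathfrak{iso}_f=\frac{1}{2}(n-1)n-1$, and you pass from ``the level sets have constant curvature'' back to ``$\dim\mathfrak{iso}_f$ is maximal'' by invoking the converse direction of the ``iff'' in Theorem \ref{main1}. That converse (Killing fields of a space-form level set extend to $f$-preserving Killing fields of $M$) is precisely the direction the paper's proof of Theorem \ref{main1} does not establish, so as written this step is a gap; it can be repaired by an isotropy count showing that a subalgebra of dimension $\frac{1}{2}(n-1)n-1$ tangent to an $(n-1)$-dimensional level set still acts transitively on it (its isotropy injects into $\mathfrak{so}(n-1)$ and cannot preserve a proper subspace), after which the warped-product derivation of Theorem \ref{main1} goes through unchanged. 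Second, the flat-level-set case is not covered by the warped-product clause of Theorem \ref{main1}; your fallback to the classical classification of manifolds with $\dim\mathfrak{iso}=\frac{1}{2}n(n-1)+1$ is plausible but unproved here, whereas a direct argument --- the full $\mathfrak{so}(n-1)$ isotropy forces the level sets to be umbilical, so the metric is still a warped product and your ODE applies with $\kappa=0$ --- would close it. The paper's scalar-curvature substitution sidesteps both issues entirely, at the mild cost of characterizing equality via level sets of $S$ rather than of $f$.
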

\begin{remark}
	The above theorems are essentially local. That is, there is no mentioning of the completeness of the metric.
\end{remark} 
Indeed, the soliton structure is so rigid that it is difficult to complete the above metrics.
\begin{corollary} \label{main3}
	Let $(M^n, g, f),$ with $n\geq 3,$ be an irreducible non-trivial complete GRS. Then $\mathfrak{iso}(M, g)$ is of dimension at most $\frac{1}{2}\left(n-1\right)n$. For $\lambda\geq 0$, equality happens iff $\lambda=0$ and it is isometric to a Bryant soliton.

\end{corollary}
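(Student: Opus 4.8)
The plan is to deduce Corollary~\ref{main3} from Theorem~\ref{main2}, the only genuinely new ingredient being what completeness imposes in the equality case. A complete irreducible Riemannian manifold is locally irreducible, so Theorem~\ref{main2} applies verbatim and gives $\dim\mathfrak{iso}(M,g)\le\frac12(n-1)n$; when equality holds it also tells us that, on an open dense subset, $g$ is a warped product $dt^{2}+F^{2}(t)\,g_{\mathbb N}$ over an $(n-1)$-dimensional space form $\mathbb N$, with the regular level sets of $f$ as fibers and $f$ a function of $t$ alone. It therefore remains to identify this warped product, assuming in addition that the metric is complete, non-trivial, irreducible, and $\lambda\ge0$.

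First I would globalize the warped-product picture. The identity component $G$ of $\I(M)$ has dimension at least $\frac12(n-1)n=\dim SO(n)$ and, by the equality case, its generic orbits are the codimension-one regular level sets of $f$; hence $G$ acts with cohomogeneity one and $M/G$ is $\RR$, a half-line, a closed interval, or a circle. A singular orbit has codimension at least two and, since a positive-dimensional complete space form cannot occur as a distance sphere about a lower-dimensional orbit, it must be a point; this forces $\mathbb N=\mathbb S^{n-1}$ with a round metric, and the circle case is excluded by completeness together with the soliton identities below. Consequently $M$ is globally $dt^{2}+F^{2}(t)\,g_{\mathbb N}$ over $\RR$, or, when $\mathbb N=\mathbb S^{n-1}$, over a half-line or interval with one or two smooth caps (so that $M\cong\RR^{n}$ or $M\cong\mathbb S^{n}$, respectively).

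Next I would substitute the ansatz into~\eqref{e1.2}. Writing $\rho=(n-2)\kappa$ for the Ricci eigenvalue of the fiber, with $\kappa\in\{1,0,-1\}$ after rescaling, the warped-product curvature formulas turn~\eqref{e1.2} into
\begin{align}
	-(n-1)\frac{F''}{F}+f''=\lambda,\qquad \rho-FF''-(n-2)(F')^{2}+FF'f'=\lambda F^{2},
\end{align}
and give $\Scal=-2(n-1)\frac{F''}{F}+(n-1)\rho F^{-2}-(n-1)(n-2)(F')^{2}F^{-2}$. The crucial external input is that a complete gradient Ricci soliton with $\lambda\ge0$ has $\Scal\ge0$ \cite{Chen}. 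If $\kappa\le0$, the last formula forces $F''\le0$, so $F$ is concave; a positive concave function on all of $\RR$ must be constant, and a flat or hyperbolic fiber admits no cap, so $F$ is constant --- but then $g$ is a flat product when $\kappa=0$ and has $\Scal<0$ when $\kappa=-1$, contradicting irreducibility and $\Scal\ge0$ respectively. Hence $\kappa=1$ and $M$ is rotationally symmetric.

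Finally, with spherical fibers one invokes the classification of complete rotationally symmetric gradient Ricci solitons: for $\lambda\ge0$ these are the round $\mathbb S^{n}$, the Gaussian $\RR^{n}$, and the cylinder $\RR\times\mathbb S^{n-1}$ (all rigid, hence trivial by \cite{Pete2}), together with, when $\lambda=0$, the Bryant soliton on $\RR^{n}$. Since $(M,g,f)$ is non-trivial it must be the Bryant soliton, and in particular $\lambda=0$. Conversely, the Bryant soliton is a complete, non-trivial, irreducible steady gradient Ricci soliton on $\RR^{n}$ whose isometry group is $O(n)$, of dimension $\frac12(n-1)n$, so the bound of Theorem~\ref{main2} is attained; this establishes the equivalence. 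The principal obstacle is this last step --- extracting the global topology from the cohomogeneity-one action and then invoking (or reproving) the classification of rotationally symmetric steady solitons to isolate the Bryant soliton --- whereas discarding the flat and hyperbolic fibers is soft once one knows $\Scal\ge0$.
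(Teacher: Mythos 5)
Your route is genuinely different from the paper's: where you globalize the cohomogeneity-one structure, plug the warped-product ansatz into \eqref{e1.2}, and use Chen's $\Scal\ge 0$ together with a concavity argument to kill non-spherical fibers, the paper instead observes that a warped product over a space form is locally conformally flat (citing Brozos-V\'azquez--Garcia-R\'io--V\'azquez-Lorenzo) and then quotes the known classifications of complete locally conformally flat shrinking and steady gradient Ricci solitons to land on the rigid examples and the Bryant soliton. Your approach is more self-contained on the curvature side but leans, as you admit, on a classification of complete rotationally symmetric solitons that you would still have to source or reprove; the paper's approach outsources exactly that step to precise references.

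There is, however, one genuine gap. You assert that the equality case of Theorem \ref{main2} gives the single warped product $g=dt^2+F^2(t)\,g_{\mathbb N}$ on a dense open set, and you then dispose of the flat-fiber case by arguing that $F''\le 0$ forces $F$ constant and hence a reducible flat product. But Theorems \ref{main1} and \ref{main2} only deliver the warped-product form when $g_{\mathbb N}$ is \emph{non-flat}: the key claim in the proof of Theorem \ref{main1} (that the second fundamental form $\nu$ is a multiple of $g_{\mathbb N}$) is derived from the evolution of the Ricci tensor and uses $\Rm(g_{\mathbb N})\neq 0$ in an essential way. When the level sets are Euclidean the paper only obtains a diagonal, multiply-warped metric $g=dt^2+\sum_i h_i^2(t)\,dx_i^2$ with possibly distinct $h_i$, and for such a metric your scalar-curvature inequality reads
\begin{align*}
\Scal=-2\sum_i \frac{h_i''}{h_i}-A^2+B, \qquad A=\sum_i\frac{h_i'}{h_i},\quad B=\sum_i\Bigl(\frac{h_i'}{h_i}\Bigr)^2,
\end{align*}
in which $B-A^2=-2\sum_{i<j}\frac{h_i'}{h_i}\frac{h_j'}{h_j}$ has no sign; so $\Scal\ge 0$ no longer forces concavity of a single warping function and your argument does not close. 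This is precisely why the paper runs the separate ODE analysis of the Appendix (Proposition \ref{p4.2}), showing that in the flat-level-set steady case the function $u_0=f'$ blows up in finite time and the metric cannot be complete. To repair your proof you would either need to import that proposition, or first upgrade the flat case to a genuine warped product (e.g., by arguing that the isotropy subalgebra of the $\tfrac12 n(n-1)$-dimensional Killing algebra acts at each point of a Euclidean level set as the full $\mathfrak{so}(n-1)$, forcing the invariant tensor $\nu$ to be a multiple of the metric) --- neither step is in your write-up. A smaller point: for $\lambda=0$ the Killing fields need not preserve $f$, so the generic orbits should be taken to be regular level sets of $S$ rather than of $f$, as in the paper's Case 2 of Theorem \ref{main2}.
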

Finally, we observe that there is a gap in the dimension. 
\begin{corollary} \label{main4} 
	Let $(M^n, g, f),$ with $n\neq 5,$ be an irreducible non-trivial GRS and let $d:=\dim \mathfrak{iso}(M, g)$. If $d<\frac{1}{2}\left(n-1\right)n$ then $d\leq \frac{1}{2}(n-1)(n-2)+1$.  	
	
\end{corollary}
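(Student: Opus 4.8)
The plan is to exploit the structure theory developed for the equality case together with the general theory of isometry groups on the regular level sets. The key observation is that, by the arguments underlying Theorem \ref{main1} and Theorem \ref{main2}, on an open dense set the manifold is (at least locally) a warped product $I \times \mathbb{N}^{n-1}$ over some $(n-1)$-dimensional fiber $(\mathbb{N}, g_{\mathbb{N}})$, and Killing fields of $(M,g)$ that are "vertical" (tangent to the fibers) restrict to Killing fields of $g_{\mathbb{N}}$, while the function $f$ — being constant on regular level sets by the soliton identity $\nabla f$ pointing along $\partial_t$ — is automatically preserved by such fields. First I would establish that $\dim \mathfrak{iso}(M,g)$ is controlled by $\dim \I(\mathbb{N}, g_{\mathbb{N}})$ plus a bounded contribution (at most $1$) coming from fields with a nontrivial $\partial_t$-component; the non-triviality and irreducibility hypotheses are what rule out extra splitting directions that would inflate this count.

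Second, I would invoke the classical gap theorem for isometry groups of Riemannian manifolds, due to Kobayashi and others (and recalled in the introduction): for an $(n-1)$-dimensional Riemannian manifold $(\mathbb{N}, g_{\mathbb{N}})$, one has $\dim \I(\mathbb{N}, g_{\mathbb{N}}) \le \tfrac{1}{2}(n-1)n$, with equality iff $\mathbb{N}$ has constant curvature; moreover there is a \emph{second-largest} possible value, namely $\tfrac{1}{2}(n-1)(n-2)+1$, and no dimension strictly between these two is realized — this is exactly the Wang--Yano type gap for the isometry dimension in dimension $n-1$. Combining this with the reduction from the first step yields: either the fiber has constant curvature (the equality case of Theorem \ref{main1}, giving $d = \tfrac12(n-1)n$), or $\dim \I(\mathbb{N}, g_{\mathbb{N}}) \le \tfrac{1}{2}(n-1)(n-2)+1$, whence $d \le \tfrac{1}{2}(n-1)(n-2)+1$ after accounting for the at-most-one extra dimension, with the bookkeeping arranged so the final bound comes out as claimed.

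The restriction $n \neq 5$ enters precisely through the classical gap theorem: the statement that there is no isometry-group dimension strictly between $\tfrac{1}{2}m(m+1)$ and $\tfrac{1}{2}m(m-1)+1$ for an $m$-dimensional manifold has a well-known exception when $m = 4$ (related to the special behavior of $SO(4)$ and $\mathbb{CP}^2$-type phenomena), which corresponds here to the fiber dimension $n-1 = 4$, i.e. $n = 5$. So I would state the fiber-level gap theorem with its hypothesis $m \neq 4$ and then transfer it.

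The main obstacle I anticipate is the first step — rigorously showing that the full Lie algebra $\mathfrak{iso}(M,g)$ on the (possibly non-complete, merely local) soliton decomposes as claimed, i.e. that every Killing field is, up to the one-dimensional "radial" ambiguity, determined by its vertical part, and that this vertical part descends to a genuine Killing field of the fiber. This requires care because the warped-product structure is only local and only on an open dense set, so one must propagate Killing fields across the singular locus and check that the identification $\mathfrak{iso}(M,g) \hookrightarrow \mathfrak{iso}(\mathbb{N}, g_{\mathbb{N}}) \oplus \mathbb{R}$ (or a subalgebra thereof) is well-defined globally; the soliton equation and the behavior of $f$ and $F$ near critical points of $f$ are the tools that make this work, much as in the proofs of the two main theorems.
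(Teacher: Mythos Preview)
Your overall strategy---reduce to an $(n-1)$-dimensional slice and invoke Kobayashi's gap theorem there---matches the paper, and you correctly identify why $n \neq 5$ enters (namely, the $m=4$ exception in the gap theorem applied to the slice of dimension $m=n-1$). But the execution has a genuine gap.

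You take as a \emph{starting point} that the manifold is locally a warped product $I \times \mathbb{N}^{n-1}$. In Theorems~\ref{main1} and~\ref{main2} that structure is the \emph{conclusion} of the equality case $d = \tfrac{1}{2}(n-1)n$, not something available in general; absent that hypothesis there is no fiber $\mathbb{N}$ to speak of. Relatedly, the ``$+1$ radial contribution'' is spurious and leaves you one off: even in a genuine warped product $dt^2 + F^2(t)\,g_{\mathbb{N}}$ the field $\partial_t$ is Killing only if $F$ is constant, i.e.\ only for a trivial soliton. With your inequality $d \le \dim \mathfrak{iso}(\mathbb{N}) + 1$ and the gap bound $\dim \mathfrak{iso}(\mathbb{N}) \le \tfrac{1}{2}(n-1)(n-2)+1$, you get only $d \le \tfrac{1}{2}(n-1)(n-2)+2$; the ``bookkeeping'' you allude to cannot close that gap.

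The paper instead uses the part of the proofs of Theorems~\ref{main1}--\ref{main2} that \emph{does} hold in general: on an irreducible non-trivial GRS every Killing field satisfies $Xf = 0$ (or $XS = 0$ when $\lambda = 0$), hence is tangent to each regular level set $M_c$, and restriction gives an \emph{injection} $\mathfrak{iso}(M,g) \hookrightarrow \mathfrak{iso}(M_c, g_c)$ with no extra dimension. If $d > \tfrac{1}{2}(n-1)(n-2)+1$ then the same inequality holds for $\dim \I(M_c, g_c)$; Kobayashi's gap theorem (valid for $\dim M_c = n-1 \neq 4$) then forces $\dim \I(M_c, g_c) = \tfrac{1}{2}(n-1)n$ and $M_c$ a space form, putting one back in the equality case of Theorem~\ref{main2}---a contradiction.
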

\begin{remark}
	The completeness of these metrics will be addressed elsewhere. 
\end{remark}

The paper is organized as follows. In Section \ref{s2}, we recall basic notations and collect preliminary materials that we will use in the rest of the paper. The main results, Theorem \ref{main1}, Theorem \ref{main2}, and Corollary \ref{main3},  will be proved in Secion \ref{s3}. Finally the Appendix considers the case that each level set of a GRS is Euclidean. 

\section{Preliminaries} \label{s2}
\quad This section is to recall auxiliary results on Killing vector fields, group actions on manifolds, and gradient Ricci solitons. The main references are \cite{BA, Cho, Ko1, Ko2, Ko3, Pete2,  Pete3}.  
\subsection{Killing vector fields and group actions on manifolds}
\quad 
In this subsection, we briefly review basic properties of Killing vector fields and their relationship to the isometry group.  Besides, we also recall some basic concepts related to group actions on manifolds. The standard texts are \cite{BA, Ko1, Pete3}. We begin by providing the definition of Riemannian isometries.
\begin{definition}
 Let $\left(M, g_M\right)$ and $\left(N, g_N\right)$ be Riemannian manifolds. An isometry from $M$ to $N$ is a diffeomorphism $\phi: M \rightarrow N$ such that $$\phi^*\left(g_N\right)=g_M.$$ In
 other words, $\phi$ is an isometry if for all $p \in M$ and tangent vectors $X_p, Y_p \in T_p M$,  
 $$
 \left.g_M\right|_p\left(X_p, Y_p\right)=\left.g_N\right|_{\phi(p)}\left(\phi_* X_p, \phi_* Y_p\right).
 $$
In this sense, we say that $\phi$ preserves the metric structure. In addition, $M$ and $N$ are called isometric.
\end{definition}
	The set of all isometries of a Riemannian manifold $(M, g)$ onto itself forms a group (indeed a Lie group), which is denoted by $\I(M)$ and called the isometry group of $M.$
\begin{definition}
 A vector field $X$ on a Riemannian manifold $(M, g)$ is called a Killing vector field if the Lie derivative with respect to $X$ of the metric $g$ vanishes, i.e.,
 $\mathcal{L}_X g=0.
 $
\end{definition}
The following proposition shows the relationship between Killing vector fields and isometries. For a proof, we refer the reader to \cite[Proposition 8.1.1]{Pete3}.
\begin{proposition}\label{p1}
A vector field $X$ on a Riemannian manifold $(M, g)$ is a Killing vector if and only if the local
 flows generated by $X$ act by isometries. 
\end{proposition}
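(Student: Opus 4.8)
The plan is to exploit the (local) one-parameter group structure of the flow $\phi_t$ generated by $X$ together with the naturality of the Lie derivative. Around a fixed point $p\in M$ there is a neighborhood $U$ and an $\varepsilon>0$ such that the flow $\phi_t\colon U\to M$ is defined and is a diffeomorphism onto its image for $|t|<\varepsilon$; here ``acts by isometries'' is read as the statement that each such $\phi_t$ restricts to a Riemannian isometry from $U$ onto $\phi_t(U)$. The entire argument then reduces to the single identity
\[
\frac{d}{dt}\phi_t^* g = \phi_t^*\big(\mathcal{L}_X g\big),
\]
valid for every $t$ in the interval of definition, from which both implications fall out at once.

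To establish this identity I would use the flow property $\phi_{t+s}=\phi_s\circ\phi_t$, which holds because $X$ is autonomous, so that $\phi_{t+s}^*=\phi_t^*\circ\phi_s^*$. Since $\tfrac{d}{dt}\phi_t^* g=\tfrac{d}{ds}\big|_{s=0}\phi_{t+s}^* g$, we compute
\[
\frac{d}{dt}\phi_t^* g = \frac{d}{ds}\Big|_{s=0}\phi_t^*\big(\phi_s^* g\big) = \phi_t^*\Big(\frac{d}{ds}\Big|_{s=0}\phi_s^* g\Big) = \phi_t^*\big(\mathcal{L}_X g\big),
\]
where $\phi_t^*$ is pulled through the $s$-derivative because, for fixed $t$, it acts on a tensor at $x$ as the fixed linear map $(d\phi_t)_x^{*}$ applied at the point $\phi_t(x)$, so differentiating in $s$ touches only the tensor field; the final equality is the very definition of the Lie derivative of the symmetric $2$-tensor $g$. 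The only care required is to justify interchanging $\phi_t^*$ with $\tfrac{d}{ds}\big|_{s=0}$, which is a routine consequence of the smoothness of $(s,x)\mapsto\phi_s(x)$ and the chain rule expressed in local coordinates.

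With the identity in hand, both directions follow immediately. If $X$ is Killing then $\mathcal{L}_X g=0$, so the right-hand side vanishes identically and $t\mapsto\phi_t^* g$ is constant; since $\phi_0=\mathrm{id}$ gives $\phi_0^* g=g$, we obtain $\phi_t^* g=g$ for all $t$, i.e.\ each $\phi_t$ acts by isometries. Conversely, if every $\phi_t$ is an isometry then $\phi_t^* g=g$ for all $t$, and differentiating at $t=0$ yields $\mathcal{L}_X g=\tfrac{d}{dt}\big|_{t=0}\phi_t^* g=0$, so $X$ is Killing.

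I do not anticipate any genuine obstacle, as the result is classical; the single subtle point is purely bookkeeping, namely keeping the local, time-dependent domains of the flow straight so that the phrase ``acts by isometries'' is correctly interpreted as a statement about local isometries between open sets, and ensuring the differentiation-under-pullback step is legitimized by the smooth dependence of the flow on its parameters. An entirely equivalent route would replace the flow computation by the pointwise Koszul-type formula $(\mathcal{L}_X g)(Y,Z)=g(\nabla_Y X,Z)+g(Y,\nabla_Z X)$, but the flow argument is preferable here because it matches the wording of the statement directly.
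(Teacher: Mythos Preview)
Your argument is correct and is the standard flow-derivative proof of this classical fact. The paper itself does not supply a proof of this proposition at all; it merely states the result and refers the reader to \cite[Proposition~8.1.1]{Pete3}, so there is nothing further to compare.
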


	Because of Proposition \ref{p1}, Killing vector fields are also commonly known as \textit{infinitesimal isometries}, a terminology that arises from the idea of integrating vector fields to obtain isometries. Furthermore, they enjoy strong analytic properties.

\begin{proposition}{\rm \cite[Proposition 8.1.4]{Pete3}}
 Let $X$ be a Killing vector field on a Riemannian manifold $(M, g)$. If there exists a point $p \in M$ such that
 	$X_p=0$ and $(\nabla X)_p=0$,
 then $X$ is identical $0.$
\end{proposition}

\begin{remark}
 The set of all Killing vector fields on a Riemannian manifold $(M, g)$ is a Lie algebra, and denoted by $\mathfrak{iso}(M, g).$ Furthermore, by Theorem 8.1.6 in \cite{Pete3}, if the Levi-Civita connection induced by the Riemannian metric $g$ on $M$ is complete, then so is each Killing vector field. In that case, $\mathfrak{iso}(M, g)$ is the Lie algebra of $\operatorname{Iso}(M)$.
\end{remark}
Next, we recall a result estimating the dimension of the Lie algebra $\mathfrak{iso}(M, g)$ and $\I (M, g)$, which will play an important role in our proof of Theorem \ref{main1}.
\begin{theorem}{\rm \cite[Theorem 8.1.6]{Pete3} \cite[Theorem 1, Note 10]{Ko1}}
	\label{t1}
	Let $(M,g)$ be a connected Riemannian manifold of dimension $n.$ 
	Then the Lie algebra $\mathfrak{iso}(M, g)$ is of dimension at most $\frac{1}{2} n(n+1).$ If $\dim \mathfrak{iso}(M, g)=\frac{1}{2} n(n+1)$, then $M$ is a space of constant curvature. Furthermore, if $\dim {\I}(M)=\frac{1}{2} n(n+1)$, then $M$ is isometric to one of the following:
	\begin{enumerate} [\rm (i)]
		\item an $n$-dimensional Euclidean space $\mathbb{R}^n$,
		\item an $n$-dimensional sphere $\mathbb{S}^n$,
		\item an $n$-dimensional real projective space,
		\item an $n$-dimensional, simply connected hyperbolic space. 
	\end{enumerate}
\end{theorem}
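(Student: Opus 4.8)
The plan is to prove the three assertions in turn — the dimension bound, the constant-curvature characterization of the equality case at the level of the Lie algebra, and finally the global classification when the full isometry group attains the bound — using the structure of Killing fields recalled above. First I would establish the bound. For a Killing field $X$ the equation $\mathcal{L}_X g=0$ forces the $(1,1)$-tensor $\nabla X$ to be skew-symmetric with respect to $g$, so at a fixed point $p$ one has $(\nabla X)_p\in\mathfrak{so}(T_pM)$. Consider the evaluation map
\[
\ell_p:\mathfrak{iso}(M,g)\longrightarrow T_pM\oplus\mathfrak{so}(T_pM),
\qquad X\mapsto\big(X_p,(\nabla X)_p\big).
\]
By the vanishing result recalled above (\cite[Proposition 8.1.4]{Pete3}), if $X_p=0$ and $(\nabla X)_p=0$ then $X\equiv 0$; hence $\ell_p$ is injective. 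Since $\dim\big(T_pM\oplus\mathfrak{so}(T_pM)\big)=n+\tfrac12 n(n-1)=\tfrac12 n(n+1)$, the bound $\dim\mathfrak{iso}(M,g)\le\tfrac12 n(n+1)$ follows at once.

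Next I would treat the equality case $\dim\mathfrak{iso}(M,g)=\tfrac12 n(n+1)$, where $\ell_p$ becomes a bijection, yielding two surjectivity statements at every $p$. Surjectivity onto the first factor produces, for each $v\in T_pM$, a Killing field with $X_p=v$; integrating via Proposition \ref{p1} shows the local isometry pseudogroup acts transitively, i.e. $M$ is locally homogeneous. Surjectivity onto the second factor, realized by fields vanishing at $p$, shows that for every $A\in\mathfrak{so}(T_pM)$ there is a Killing field with $X_p=0$ and $(\nabla X)_p=A$; the flow of such a field fixes $p$ and acts on $T_pM$ by $\exp(tA)$, so the linear isotropy at $p$ contains all of $SO(T_pM)$. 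Since any isometry fixing $p$ preserves $\Rm_p$, the tensor $\Rm_p$ is an $SO(n)$-invariant algebraic curvature tensor; as the only such tensors are multiples of the constant-curvature model $R_0(X,Y)Z=g(Y,Z)X-g(X,Z)Y$, the sectional curvature at $p$ equals a scalar $\kappa(p)$. Local homogeneity (or Schur's lemma when $n\ge 3$) then forces $\kappa$ to be globally constant, so $M$ has constant curvature.

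Finally I would obtain the classification under the stronger hypothesis $\dim\I(M)=\tfrac12 n(n+1)$. The constant-curvature conclusion identifies the universal cover $\tilde M$, after rescaling, with one of $\mathbb{R}^n$, $\mathbb{S}^n$, or $\mathbb{H}^n$, and $M=\tilde M/\Gamma$ for a discrete $\Gamma$ acting freely and properly discontinuously by isometries, with $\I(M)\cong N(\Gamma)/\Gamma$ where $N(\Gamma)$ is the normalizer of $\Gamma$ in $\I(\tilde M)$. For this quotient to retain the full dimension $\tfrac12 n(n+1)$ one needs $N(\Gamma)$ to contain the identity component of $\I(\tilde M)$; since conjugation by a connected group into a discrete group is constant, $\Gamma$ must be a discrete central subgroup of that identity component. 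A case analysis of the centers finishes the proof: the Euclidean group $\mathbb{R}^n\rtimes SO(n)$ has trivial center, giving $M=\mathbb{R}^n$; the center of $O(n+1)$ is $\{\pm I\}$, giving either $\mathbb{S}^n$ or, via the fixed-point-free antipodal map, $\mathbb{RP}^n$; and the identity component of $\I(\mathbb{H}^n)$ admits no nontrivial discrete central subgroup acting freely, giving $M=\mathbb{H}^n$.

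The step I expect to be the main obstacle is the equality case: verifying cleanly that full $SO(n)$-isotropy pins the curvature operator to the one-dimensional constant-curvature subspace of algebraic curvature tensors, and then correctly promoting the pointwise constant $\kappa(p)$ to a global one. The classification step is then largely bookkeeping about the centers of the three model isometry groups, but one must still check that each candidate $\Gamma$ indeed acts freely and yields an isometry group of the asserted dimension.
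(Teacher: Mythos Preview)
The paper does not prove this theorem: it is quoted as a preliminary result from \cite[Theorem 8.1.6]{Pete3} and \cite[Theorem 1, Note 10]{Ko1}, with no argument supplied in the text. Your outline is the standard textbook proof (essentially the one in those references) and is correct in substance.

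One small imprecision in your classification step: from the fact that the identity component $\I_0(\tilde M)$ normalizes the discrete group $\Gamma$ you correctly deduce, via connectedness, that $\I_0(\tilde M)$ \emph{centralizes} $\Gamma$; but this places $\Gamma$ in the centralizer of $\I_0(\tilde M)$ inside $\I(\tilde M)$, not necessarily in the center of $\I_0(\tilde M)$ itself. The distinction matters for $\mathbb{S}^n$ with $n$ even, where the antipodal map $-I$ lies in $O(n+1)\setminus SO(n+1)$ yet is centralized by $SO(n+1)$. You implicitly correct for this when you invoke the center of $O(n+1)$ rather than of $SO(n+1)$, and the case analysis for $\mathbb{R}^n$, $\mathbb{S}^n/\mathbb{RP}^n$, and $\mathbb{H}^n$ goes through once you compute centralizers rather than centers in each model. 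This is cosmetic; there is no genuine gap.
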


In the rest of this subsection, we recall some basic notions about group actions on manifolds following the book by Alexandrino and Bettiol \cite{BA}.
\begin{definition} 
 Let $G$ be a Lie group and $M$ a smooth manifold. A smooth map $\l: G \times M \rightarrow M$ is called \textit{a (left) action} of $G$ on $M$, or \textit{a (left) $G$-action} on $M$, if
 \begin{itemize}
 	\item[(i)] $\l(e, x)=x$, for all $x \in M,$ where $e$ is the identity element of $G;$
 	\item[(ii)] $\l\left(\mathsf{g}_1, \l\left(\mathsf{g}_2, x\right)\right)=\l\left(\mathsf{g}_1 \mathsf{g}_2, x\right)$, for all $\mathsf{g}_1, \mathsf{g}_2 \in G$ and $x \in M$.
 \end{itemize}

\end{definition}
We often write $\mathsf{g} \cdot x$ or just $\mathsf{g} x$ in place of the more pedantic notation $l(\mathsf{g}, x)$. A right action $r: M \times G \rightarrow M$ can be defined analogously and we write $x \cdot \mathsf{g}$ or $x \mathsf{g}$.
\begin{definition}
An action is said to be \textit{proper} if the associated map
$G \times M \mapsto M \times M$, given by \begin{align}\label{2.1}
G \times M \ni(\mathsf{g}, x) \longmapsto(\mathsf{g}\cdot x, x) \in M \times M
\end{align}
is \textit{proper}, i.e., if the preimage of any compact subset of $M \times M$ under \eqref{2.1} is a compact subset of $G \times M$.
\end{definition}

From Proposition 3.62 and Theorem 3.65 in \cite{BA}, we see that  actions by closed subgroups of isometries are
proper, and conversely every proper action can be
made isometric with respect to a certain Riemannian metric.

\begin{definition}\label{d2.14}
A Riemannian
manifold $(M, g)$ is said to be \textit{homogeneous} if its isometry group acts \textit{transitively}, i.e., for each pair of points $x, y \in M$ there is a $\mathsf{g} \in \I (M)$ such that $ \mathsf{g}\cdot x=y.$
\end{definition}
\subsection{Gradient Ricci solitons}\label{s2.3}
\quad In this subsection, we shall recall some basic facts and collect preliminaries about GRS. Let $\left(M, g, f\right)$ be a GRS of dimension $n \geq 3.$ The Ricci curvature $\Ric,$ the scalar curvature $S$ and the poential function $f$ are related by the following equations \cite[Proposition 2.1]{EMC}.
\begin{proposition}\label{2.15}
For any gradient Ricci soliton $\left(M, g, f\right)$, we have
\begin{align}\label{p2.3}
\Delta_f S+2|\Ric|^2=2 \lambda S,
\end{align}
\begin{align}\label{p2.15}
S+|\nabla f|^2-2 \lambda f=C
\end{align}
for some constant $C.$ Here $\Delta_f$ denotes the $f$-Laplacian, $\Delta_f=\Delta-g\left(\nabla f, \cdot\right).$
\end{proposition}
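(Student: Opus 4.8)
The plan is to obtain both identities by standard tensor manipulations starting from the soliton equation \eqref{e1.2}; I will only indicate the sequence of steps, since each is a routine computation (these facts are classical and could, of course, simply be cited from \cite{EMC}). First I would trace \eqref{e1.2} to get $S+\Delta f=n\lambda$, so that $\nabla\Delta f=-\nabla S$. Next I would take the divergence of \eqref{e1.2}: on the Ricci side the twice-contracted second Bianchi identity gives $\divg\Ric=\tfrac12\nabla S$, while commuting covariant derivatives yields $\divg(\Hess f)=\nabla\Delta f+\Ric(\nabla f,\cdot)$; since $\divg(\lambda g)=0$, combining these with $\nabla\Delta f=-\nabla S$ produces the key identity
\begin{align}\label{key-ric}
\Ric(\nabla f,\cdot)=\tfrac12\nabla S.
\end{align}

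To prove \eqref{p2.15}, I would contract \eqref{e1.2} with $\nabla f$. The Hessian term contributes $\Hess f(\nabla f,\cdot)=\tfrac12\nabla|\nabla f|^2$, and using \eqref{key-ric} the equation becomes $\tfrac12\nabla S+\tfrac12\nabla|\nabla f|^2=\lambda\nabla f$, i.e. $\nabla\big(S+|\nabla f|^2-2\lambda f\big)=0$; on the connected manifold $M$ this gives \eqref{p2.15} with $C$ the resulting constant. To prove \eqref{p2.3}, I would take the divergence of \eqref{key-ric}. The left-hand side expands, using $\divg\Ric=\tfrac12\nabla S$ and then substituting $\Hess f=\lambda g-\Ric$ from \eqref{e1.2}, as
\[
\divg\big(\Ric(\nabla f,\cdot)\big)=\tfrac12\langle\nabla S,\nabla f\rangle+\langle\Ric,\Hess f\rangle=\tfrac12\langle\nabla S,\nabla f\rangle+\lambda S-|\Ric|^2,
\]
while the right-hand side is $\tfrac12\Delta S$. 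Equating and rearranging yields $\Delta S-\langle\nabla f,\nabla S\rangle=2\lambda S-2|\Ric|^2$, which is \eqref{p2.3} once one recalls $\Delta_f=\Delta-g(\nabla f,\cdot)$.

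I do not expect a genuine obstacle in this argument; the only points requiring care are the correct curvature term and sign in the commutation formula $\divg(\Hess f)=\nabla\Delta f+\Ric(\nabla f,\cdot)$, and the substitution $\langle\Ric,\Hess f\rangle=\langle\Ric,\lambda g-\Ric\rangle=\lambda S-|\Ric|^2$ used when taking the divergence of \eqref{key-ric}. Everything else is bookkeeping, and the whole computation is local, so no completeness or compactness hypothesis is needed.
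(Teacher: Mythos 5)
Your computation is correct, and it is exactly the standard argument: the paper does not prove this proposition but cites \cite[Proposition 2.1]{EMC}, where the identities are derived in precisely this way (contracted second Bianchi identity plus the commutation formula giving $2\Ric(\nabla f,\cdot)=\nabla S$, then contraction with $\nabla f$ and a divergence). All signs and curvature terms in your steps check out, and connectedness of $M$ (assumed throughout the paper) is indeed what makes $C$ a single constant.
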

In \cite[Proposition 2.1]{Pete2}, Petersen and Wylie proved the following result about a Killing fields on a GRS.
\begin{proposition}\label{p2}
If $X$ is a Killing field on a gradient Ricci soliton $(M, g, f),$ then $\nabla (Xf)$ is parallel. Moreover, if $\lambda \neq 0$ and $\nabla (Xf)=0$, then also $X f=0$. 
\end{proposition}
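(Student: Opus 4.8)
The plan is to exploit the fact that a Killing field generates local flows by isometries, together with the two structural identities for gradient Ricci solitons recorded in Proposition \ref{2.15}.

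\emph{First part.} Let $\phi_t$ denote the local flow of $X$. By Proposition \ref{p1} each $\phi_t$ is a local isometry, so $\phi_t^* g = g$ and, by naturality of curvature under isometries, $\phi_t^* \Ric = \Ric$. Because $\phi_t$ preserves the metric it also preserves the Levi-Civita connection, hence commutes with the Hessian operator: $\phi_t^*(\Hess f) = \Hess(f\circ\phi_t)$. Pulling back the soliton equation \eqref{e1.2} therefore gives $\Ric + \Hess(f\circ\phi_t) = \lambda g$ for all small $t$. Differentiating at $t=0$, and using that $\left.\tfrac{d}{dt}\right|_{t=0}(f\circ\phi_t) = Xf$ together with the linearity of $\Hess$, we obtain $\Hess(Xf) = 0$; equivalently $\nabla(Xf)$ is parallel. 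An infinitesimal variant of the same argument: $X$ Killing gives $\mathcal{L}_X g = 0 = \mathcal{L}_X \Ric$, so from \eqref{e1.2} we get $\mathcal{L}_X \Hess f = 0$; since a Killing field is affine, $\mathcal{L}_X$ commutes with $\nabla$ and with $d$, whence $\mathcal{L}_X \Hess f = \Hess(\mathcal{L}_X f) = \Hess(Xf)$.

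\emph{Second part.} Suppose now $\lambda\neq 0$ and $\nabla(Xf)=0$, so that $Xf$ is constant. The idea is to apply the derivation $X$ to the identity \eqref{p2.15}, namely $S+|\nabla f|^2-2\lambda f=C$. Since $X$ is Killing it preserves the scalar curvature, so $X(S)=0$. For the middle term, $X(|\nabla f|^2)=\mathcal{L}_X\big(g(\nabla f,\nabla f)\big)=2\,g(\mathcal{L}_X\nabla f,\nabla f)$, and because $X$ preserves $g$ one has $\mathcal{L}_X\nabla f=(\mathcal{L}_X df)^\sharp=(d(Xf))^\sharp=\nabla(Xf)$, which vanishes by hypothesis; hence $X(|\nabla f|^2)=0$. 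Applying $X$ to \eqref{p2.15} thus reduces to $-2\lambda\,(Xf)=0$, and since $\lambda\neq 0$ we conclude $Xf=0$.

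The only genuinely delicate point is the interchange of $\phi_t^*$ (equivalently $\mathcal{L}_X$) with the Hessian and the Ricci tensor; this is exactly the standard fact that a Killing field is an infinitesimal affine transformation and that $\Ric$ and $\Hess$ are natural with respect to isometries. Once that is granted, both assertions are direct substitutions into the two soliton identities of Proposition \ref{2.15}, so I do not expect any substantive obstacle beyond carefully recording these naturality statements.
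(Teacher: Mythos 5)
Your argument is correct. Note, however, that the paper does not actually prove Proposition \ref{p2}: it is quoted from Petersen--Wylie \cite[Proposition 2.1]{Pete2}, and the accompanying remark is only concerned with explaining why the boundedness of scalar curvature used in their original argument can be dispensed with (via the one-sided bounds of \cite[Theorem 8.6]{AM}). Your write-up therefore supplies a self-contained proof where the paper gives a citation. The first half --- $\mathcal{L}_X g = 0 = \mathcal{L}_X \Ric$, hence $\mathcal{L}_X \Hess f = \Hess(Xf) = 0$ by the affine property of Killing fields --- is exactly the standard Petersen--Wylie computation. The second half is where your route is genuinely cleaner than the one the paper's remark has to patch: applying the derivation $X$ to the pointwise identity \eqref{p2.15} of Proposition \ref{2.15}, together with $XS=0$ and $\mathcal{L}_X\nabla f=(d(Xf))^{\sharp}=\nabla(Xf)=0$, yields $-2\lambda\,Xf=0$ directly, with no completeness or curvature-bound hypothesis whatsoever (equivalently, in integrated form, $f\circ\varphi_t^X=f+t\,Xf$ while $S$ and $|\nabla f|^2$ are invariant under the isometries $\varphi_t^X$, forcing $2\lambda t\,Xf=0$). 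The only point that must be justified carefully is the commutation of $\mathcal{L}_X$ with $\Hess$ and $\Ric$, which holds precisely because a Killing field is an infinitesimal affine transformation; you flag this yourself, and it is standard.
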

\begin{remark}
We emphasize that to prove the above result, Petersen and Wylie used the condition that scalar curvature is bounded. However, such assumption can be omitted since, for $\lambda\neq 0$, the scalar curvature of a GRS is always bounded from either below or above \cite[Theorem 8.6]{AM}). This is enough for the argument to go through.
\end{remark}
Consequently, Petersen and Wylie \cite{Pete2} gave the following splitting result.
\begin{lemma}{\rm \cite[Corollary 2.2]{Pete2}} \label{l2.17}
  If $X$ is a Killing field on a GRS $\left(M, g, f, \lambda\right)$ then either $\nabla(Xf)=0$ or $M$ locally splits a line isometrically. The latter means that, around each point $p$, there is a neighborhood $U=V \times I$, where $V$ is an open neighborhood of a submanifold and $T_pV\perp (\nabla (Xf))_p$ and $I$ is an open interval. The Riemannian metric in $U$ is the direct product of the induced metrics on each factor and $(V, g_{\mid V}, f)$ is a GRS.
\end{lemma}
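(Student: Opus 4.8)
This is \cite[Corollary~2.2]{Pete2}, and the plan is to combine Proposition~\ref{p2} with the classical fact that a parallel vector field splits the metric locally. Set $W:=\nabla(Xf)$. By Proposition~\ref{p2}, $W$ is parallel, so $|W|$ is constant; hence either $W\equiv 0$ on $M$, which is the first alternative in the statement, or $W$ vanishes nowhere, and this is the case I would analyze.

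Suppose $W$ is a nowhere-vanishing parallel field. Then both the line field $\mathbb{R}W$ and its orthogonal complement $W^{\perp}$ are parallel distributions: indeed $\nabla_Y(hW)=(Yh)\,W\in\mathbb{R}W$, while for $Z\perp W$ one has $g(\nabla_Y Z,W)=Y\,g(Z,W)-g(Z,\nabla_Y W)=0$, so $\nabla_Y Z\in W^{\perp}$. Parallel distributions are integrable with totally geodesic leaves, so the local de Rham splitting theorem gives, around each $p$, a neighborhood $U$ isometric to a Riemannian product $U=V\times I$: the factor $I$ is an open interval tangent to $W$ (carrying the metric $dt^{2}$ for an affine parameter $t$ along the $W$-geodesics), and $V$ is an integral manifold of $W^{\perp}$, so that $T_{p}V=(W_{p})^{\perp}$, i.e. $T_{p}V\perp(\nabla(Xf))_{p}$. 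This is exactly the asserted local splitting of a line.

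Finally I would check that each slice, with the restricted data, is a GRS with the same $\lambda$. Since $U=V\times I$ is a Riemannian product, $\Ric_{U}=\Ric_{V}\oplus 0$ (the one-dimensional $I$-factor is flat and there are no mixed components), and for vectors $Y,Z$ tangent to a slice $V\times\{t\}$ the product connection gives $\nabla^{U}_{Y}Z=\nabla^{V}_{Y}Z$, whence $\Hess_{U}f(Y,Z)=\Hess_{V}\!\big(f|_{V\times\{t\}}\big)(Y,Z)$. Restricting \eqref{e1.2} to $T(V\times\{t\})$ then yields $\Ric_{V}+\Hess_{V}\!\big(f|_{V\times\{t\}}\big)=\lambda\,g_{V}$, so $(V,g_{\mid V},f|_{V\times\{t\}})$ is a GRS with the same constant $\lambda$; inspecting the $I$-component and the mixed components of \eqref{e1.2} further shows that $f|_{V\times\{t\}}$ is independent of $t$ modulo an additive function of $t$ alone, so that $(V,g_{\mid V},f)$ is well defined. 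There is no genuine obstacle in this argument — the only care needed is that the de Rham splitting produces a true Riemannian product (both leaves totally geodesic), not merely a topological one — since the essential input, that $\nabla(Xf)$ is parallel, is already supplied by Proposition~\ref{p2}.
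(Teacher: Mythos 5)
The paper does not prove this lemma itself --- it is quoted directly from \cite[Corollary 2.2]{Pete2} --- and your argument reproduces exactly the reasoning of that source: Proposition \ref{p2} makes $\nabla(Xf)$ parallel, constancy of its norm on the connected $M$ gives the dichotomy, and a nowhere-vanishing parallel field yields the local de Rham splitting $U=V\times I$ with the soliton equation restricting to the slices. Your proof is correct and takes essentially the same approach.
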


For $\left(M, g, f\right)$ a shrinking gradient Ricci soliton,
upon scaling the metric $g$ by a constant, we can assume that $\lambda=\frac{1}{2}$.
Then the equation \eqref{e1.2} takes the form
\begin{align}\label{e3.9}
	\Ric+\nabla^2 f=\frac{1}{2} g.
\end{align}
By adding a constant to $f$ if necessary and the equation \eqref{p2.15}, we may normalize the soliton such that
\begin{align}\label{e3.10}
	S+|\nabla f|^2=f.
\end{align}
Moreover, according to a result by Chen \cite[Corollary 2.5]{Chen} (see also \cite[Theorem 8.6]{AM}), we have $S \geq 0$ for any shrinking gradient Ricci soliton. This and \eqref{e3.9} entail that $f\geq 0.$ On the other hand,  from Haslhofer-M\"{u}ller's works \cite[Lemma 2.1]{HR}
 (see also \cite[Theorem 1.1]{C2}), we know that the potential function $f$ has quadratic growth at infinity. Using these results, we obtain the following proposition.
\begin{proposition}\label{po2.15}
Let $(M, g, f)$ be an $n$-dimensional complete noncompact shrinking gradient Ricci soliton with \eqref{e3.9} and \eqref{e3.10}. Then each regular level set of $f$ is a compact set.
\end{proposition}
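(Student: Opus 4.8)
The plan is to deduce the statement from the quadratic growth of the potential function recalled just above, which makes $f$ a proper exhaustion function. Write $r(x):=\dist(x,x_0)$, where $x_0\in M$ is a point at which $f$ attains its minimum. By the estimates of Haslhofer--M\"{u}ller \cite{HR} (see also \cite{C2}), there are constants $a>0$ and $b\geq 0$, depending only on $n$ and on the geometry of $g$ near $x_0$, such that
\[
f(x)\;\geq\; a\,(r(x)-b)^2 \qquad \text{whenever } r(x)\geq b .
\]

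First I would use this lower bound to show that the sublevel sets of $f$ are bounded. Let $c$ be a regular value of $f$ and set $\Sigma_c:=f^{-1}(c)$. If $x\in\Sigma_c$ has $r(x)\geq b$, then $a(r(x)-b)^2\leq f(x)=c$, so $r(x)\leq b+\sqrt{c/a}$; combining this with the trivial estimate on the set $\{r\leq b\}$ gives $\Sigma_c\subseteq \overline{B}(x_0,\,b+\sqrt{c/a})$. (The same computation in fact bounds the whole sublevel set $\{f\leq c\}$.)

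Next, since $(M,g)$ is complete, the Hopf--Rinow theorem guarantees that closed metric balls are compact, so $\overline{B}(x_0,\,b+\sqrt{c/a})$ is compact. As $\Sigma_c=f^{-1}(c)$ is closed in $M$ --- being the preimage of a point under the continuous map $f$ --- and is contained in this compact ball, it is itself compact, which is the claim. The only real ingredient is the quadratic growth estimate, which is cited rather than reproved, so the argument is short; the one point worth emphasizing is that completeness is used essentially here, through Hopf--Rinow, to promote boundedness to compactness. This is precisely why the completeness hypothesis, which played no role in the local statements earlier in the paper, is imposed in this proposition.
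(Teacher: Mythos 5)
Your argument is correct and follows essentially the same route as the paper: both invoke the Haslhofer--M\"{u}ller quadratic lower bound for $f$ to show the level set is bounded, and then combine closedness with completeness (Hopf--Rinow) to get compactness. Your version is slightly more explicit about the role of Hopf--Rinow, which the paper leaves implicit, but the substance is identical.
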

\begin{proof}[Proof of Proposition \ref{po2.15}]
For each regular value $c\in f\left(M\right),$ we consider the level set $M_c$ of $f$.
Since $f$ is a smooth function and $\{c\}$ is a closed set, $M_c=f^{-1}(c)$ is also a closed set. By Lemma 2.1 in \cite{HR}, there exists a point $p \in M$ where $f$ attains its infimum and $f$ satisfies the following quadratic growth estimate
\begin{align}
	\frac{1}{4}\left[\left(r(x)-5 n\right)_{+}\right]^2 \leq f(x) \leq \frac{1}{4}\left(r(x)+\sqrt{2 n}\right)^2, \nonumber
\end{align}
where $r(x)$ is a distance function from $p$ to $x$, and $a_{+}=\max \{a, 0\}$ for $a \in \mathbb{R}$. This and the fact that $f\geq 0$ imply that $M_c$ is a bounded set and therefore $M_c$ is a compact set.
\end{proof}

\section{Dimension bound and Rigidity} \label{s3}
\quad This section is devoted to the proof of our main results. Let $\left(M, g, f\right)$ be a GRS  of dimension $n \geq 3.$ 
Recall
\[\mathfrak{iso}(M, g):= \{X \text{ is a smooth tangent vector field on $M$}, \mathcal{L}_X g=0\}.\]
We also define 
\begin{align}\label{e3.2}
\mathfrak{iso}_f(M, g, f):= \{X \text{ is a smooth tangent vector field on $M$}, \mathcal{L}_X g=0=\mathcal{L}_X f\}.
\end{align}
Then, we see that $\mathfrak{iso}_f(M, g, f)\subset \mathfrak{i s o}\left(M, g\right)$ is a vector subspace. Towards our goal, we will establish the following lemma concerning $\mathfrak{i s o}\left(M, g\right)$.
\begin{lemma}\label{l3.1}
If $X\in \mathfrak{iso}(M, g)$ and $g(X,\nabla f)=Xf$ is constant then $[X, \nabla f]=0$. 
\end{lemma}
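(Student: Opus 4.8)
The plan is to differentiate the hypothesis and use the Killing equation together with the soliton equation to force the commutator $[X,\nabla f]$ to vanish pointwise. Recall that for a Killing field $X$ the endomorphism $A := \nabla X$ is skew-symmetric with respect to $g$, i.e. $g(\nabla_Y X, Z) = -g(\nabla_Z X, Y)$ for all $Y, Z$. I would start from the hypothesis that $Xf = g(X,\nabla f)$ is a constant function on $M$, so that $\nabla(Xf) = 0$; this is exactly the situation of Proposition \ref{p2} and Lemma \ref{l2.17}. Differentiating $g(X,\nabla f)$ in an arbitrary direction $Y$ gives
\[
0 = Y\big(g(X,\nabla f)\big) = g(\nabla_Y X, \nabla f) + g(X, \nabla_Y \nabla f) = g(\nabla_Y X, \nabla f) + \Hess f(X, Y).
\]

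Next I would rewrite each term using the structural equations. Using skew-symmetry of $A$, the first term is $g(\nabla_Y X,\nabla f) = -g(\nabla_{\nabla f} X, Y)$. For the second term, the soliton equation \eqref{e1.2} gives $\Hess f(X,Y) = \lambda g(X,Y) - \Ric(X,Y)$. Separately, the standard fact that a Killing field commutes with Ricci in the appropriate sense — more precisely, $\mathcal{L}_X \Ric = 0$, equivalently $\nabla_{\nabla f}X$ relates to $\Ric$ via the Killing identity $\nabla^2_{Y,Z}X = -\Rm(X,Y)Z$ — should be invoked so that all the curvature contributions are controlled. The key identity I want to extract is an expression for $\nabla_{\nabla f} X$, i.e. for $[X,\nabla f] = \nabla_X \nabla f - \nabla_{\nabla f} X$. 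Note $\nabla_X \nabla f$ has components $\Hess f(X,\cdot)^{\sharp} = \lambda X - \Ric(X,\cdot)^{\sharp}$, which is already handled by the soliton equation; the content is therefore entirely in showing $\nabla_{\nabla f}X = \lambda X - \Ric(X,\cdot)^\sharp$ as well.

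To get that, I would feed the displayed derivative identity back in: it says $g(\nabla_{\nabla f} X, Y) = \Hess f(X,Y) = g(\nabla_X \nabla f, Y)$ for every $Y$, hence $\nabla_{\nabla f} X = \nabla_X \nabla f$, which is precisely $[X,\nabla f] = 0$. So in fact the argument is short once the first differentiation is done correctly: the constancy of $Xf$ gives $g(\nabla_Y X,\nabla f) = -\Hess f(X,Y)$ for all $Y$; skew-symmetry of $\nabla X$ converts the left side to $-g(\nabla_{\nabla f}X, Y)$; and comparing with $g(\nabla_X\nabla f, Y) = \Hess f(X,Y)$ yields $\nabla_{\nabla f}X = \nabla_X\nabla f$, i.e. $[\nabla f, X] = 0$.

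**Main obstacle.** The only delicate point is the sign bookkeeping in converting $g(\nabla_Y X, \nabla f)$ via skew-symmetry of $\nabla X$ and matching it against $g(\nabla_{\nabla f} X, Y)$; one must be careful that $\mathcal{L}_X g = 0$ is being used in the form $g(\nabla_Y X, Z) + g(\nabla_Z X, Y) = 0$ and not confuse it with symmetry of the Hessian. A secondary subtlety is that, strictly, we do not even need $Xf$ to be globally constant — constancy along flow lines of $\nabla f$, or just $\nabla(Xf)=0$, suffices — but since the hypothesis already gives $Xf$ constant, this is not an issue here. I expect the proof to be essentially a two-line computation after setting up notation for $A = \nabla X$ and its skew-symmetry.
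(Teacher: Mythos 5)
Your proof is correct and is essentially the same computation as the paper's: differentiate the constant $g(X,\nabla f)$ in an arbitrary direction $Y$, use the Killing equation (which you phrase as skew-symmetry of $\nabla X$, while the paper keeps the $\mathcal{L}_X g$ term explicit) together with the symmetry of $\Hess f$, and conclude $\nabla_{\nabla f}X=\nabla_X\nabla f$. The middle paragraph's appeal to the soliton equation and the identity $\nabla^2_{Y,Z}X=-\Rm(X,Y)Z$ is unnecessary, as you yourself note when you collapse the argument to the two-line version at the end; neither the paper's proof nor yours actually uses the soliton structure.
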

\begin{proof}
We observe that
	\begin{align} \label{3.1}
		g\left(\mathcal{L}_X \nabla f, Y\right) & =g\left(\nabla_X \nabla f-\nabla_{\nabla f} X, Y\right) \nonumber\\
		& =(\operatorname{Hess} f)(X, Y)+g\left(\nabla_Y X, \nabla f\right)-\left(\mathcal{L}_X g\right)(Y, \nabla f) \nonumber\\
		& =(\operatorname{Hess} f)(X, Y)-g\left(X, \nabla_Y \nabla f\right)+Y\left(\mathcal{L}_X f\right)-\left(\mathcal{L}_X g\right)(Y, \nabla f) \nonumber\\
		& =Y\left(\mathcal{L}_X f\right)-\left(\mathcal{L}_X g\right)(Y, \nabla f)
	\end{align}
	for any $Y\in TM.$ Since $X$ is a Killing vector field, $\left(\mathcal{L}_X g\right)(Y, \nabla f)=0.$ Since $\mathcal{L}_X f=Xf$ is a constant, $Y\left(\mathcal{L}_X f\right)=0.$ Combining these results yields $[X, \nabla f]=\mathcal{L}_X \nabla f=0.$\end{proof}

We now give the proof of Theorem \ref{main1}.
\begin{proof}[Proof of Theorem \ref{main1}]
 Let $M_c$ be a level set of $f$ with the induced metric $g_c:=g_{\mid TM_c}$, where $c\in f\left(M\right)$ is a regular value. By the level set theorem \cite{Tu}, $(M_c, g_c)$ is a smooth submanifold of co-dimension one. Consider $X\in \mathfrak{iso}_f(M, g, f)$ and let $\varphi_t^X$ denote the local flow generated by the vector field $X.$ Then, we have 
$
X=\left.\frac{d}{d t}\right|_{t=0} \varphi_t^X.
$
Since $\mathcal{L}_X g=0$ and $\mathcal{L}_X f=0,$ we deduce that 
$\left(\varphi_t^X\right)^* g=g$ and 
\begin{align}\label{e3.1}
\left(\varphi_t^X\right)^* f=f \Leftrightarrow f \circ \varphi_t^X=f,
\end{align}
where $\left(\varphi_t^X\right)^*$ is the pull-back of $\varphi_t^X.$  By Proposition \ref{p1}, we see that $\varphi_t^X: M \rightarrow M$ generates local isometries 
	and $\varphi_t^X\left(M_c\right) \subseteq M_c.$ From this, we notice that $\varphi_t^X$ induces a map $\left.\widetilde{\varphi}_t^X \equiv \varphi_t^X\right|_{M_c}: M_c \rightarrow M_c$. We consider the vector field
$$
\widetilde{X}=\left.X\right|_{M_c}=\left.\frac{d}{d t}\right|_{t=0}\left(\left.\varphi_t^X\right|_{M_c}\right)=\left.\frac{d}{d t}\right|_{t=0} \widetilde{\varphi}_t^X.
$$
Since $\widetilde{\varphi}_t^X$ is an isometry on $M_c,$ we conclude that $\widetilde{X} \in \mathfrak{i s o}\left(M_c, g_c\right)=\left\{X \in T M_c \mid \mathcal{L}_X g_c=0\right\}.$ Thus, the map 
$$
\begin{aligned}
	\pi: \mathfrak{i s o}_f(M, g, f) & \rightarrow \mathfrak{i s o}\left(M_c, g_c\right) \\
	X & \mapsto \pi(X):=\left.\widetilde{X} = X\right|_{M_c}
\end{aligned}
$$
is well defined. Moreover, $\pi$ is a linear map. Next, we will prove that $\pi$ is injective. Suppose that $\left.X\right|_{M_c}\equiv 0,$ where $X\in \mathfrak{iso}_f(M, g, f).$ Since $\mathcal{L}_X g=0$ and $\mathcal{L}_X f=0,$ Lemma \ref{l3.1} yields 
\begin{align}\label{3.2}
	[X, \nabla f]=\mathcal{L}_X \nabla f=0.
\end{align}
 Let $p\in M_c$ and $Y\in TM$ be an arbitrary vector field. Then, we have $Y=Z+W,$ where $Z\in TM_c$ and $W \in T^{\perp} M_c.$ Since $\nabla f$ is a normal vector field of $TM_c,$ $W=\eta\nabla f,$ where $\eta$ is a smooth function. Therefore, we get
	\begin{align}\label{3.3}
		\left.\left(\nabla_Y X\right)\right|_p &=\left.\left(\nabla_{Z+W} X\right)\right|_p
		=\left.\left(\nabla_Z X\right)\right|_p+\left.\eta\left(\nabla_{\nabla f} X\right)\right|_p=\left.\eta\left(\nabla_{\nabla f} X\right)\right|_p.
	\end{align}
	The last equality follows from $\left.X\right|_{M_c}=0$. Furthermore, using \eqref{3.2}, we compute
	\begin{align}\label{3.4}
		\left.\left(\nabla_{\nabla f} X\right)\right|_p=\left.\left(-[X, \nabla f]+\nabla_X \nabla f\right)\right|_p=(\nabla_X \nabla f)|_p=0 .
	\end{align}
Since $Y\in TM$ is an arbitrary vector field, we conclude that $\left.(\nabla X)\right|_p=0.$ Since $X_p=0$, by Proposition \ref{p1}, we deduce that $X\equiv 0.$ This shows that the map $\pi$ is injective. From Theorem \ref{t1} and note that $\dim M_c=n-1,$  we obtain 
\begin{align}\label{3.6}
\dim\mathfrak{i s o}_f(M, g, f) \leq \dim\mathfrak{i s o}\left(M_c, g_c\right) \leq \frac{1}{2}(n-1) n.
\end{align}

Next, we will consider the case $\dim\mathfrak{i s o}_f(M, g, f)=\frac{1}{2}(n-1) n.$ By Theorem \ref{t1}, each regular connected component of $f$ with the induced metric must be of constant curvature. Consequently, each is homogeneous and complete \cite[Theorem IV.4.5]{Ko1}. Thus, the Lie algebra $\mathfrak{i s o}_f(M, g, f)$ indeed generates a global group of isometries on $(M, g)$ and the action is transitive on each regular level set. Therefore $S$ is contant on each regular level set and, by Prop \ref{2.15}, so is $|\nabla f|$ and $\frac{df}{|\nabla f|}$ is closed and locally exact. Define $t$ by $dt=\frac{df}{|\nabla f|}$ then the metric can be written locally as 
\[ g= dt^2+g_t,\] 
where $g_t$ is a family of metrics on the differentiable manifold corresponding to a regular connected component. Let $L$ denote the shape operator and \[\nu := \frac{\partial g_t}{\partial t}=  2 g_t\circ L. \]
Furthermore, by the constancy of $|\nabla f|$ on each regular connected component, singular values for $f: M \mapsto \mathbb{R}$ are isolated. By continuity, nearby connected components must be obtained from the same model space $(\mathbb{N}^{n-1}, g_{\mathbb{N}})$.

Since $g_t$ is homogeneous, so is $\nu$ and it suffices to consider its value at a point. We recall the evolution of the Ricci tensor, $\Ric_t:= \Ric(g_t)$, \cite[page 109]{Cho}, for normal coordinates, 
\begin{align*}
	\frac{\partial}{\partial t} \Ric_{ij} &= -\frac{1}{2}\Big(\Delta_L \nu_{ij}+\nabla_i\nabla_j \tr(\nu))-\nabla_i (\delta \nu)_j-\nabla_j(\delta v)_i \Big),\\
	\Delta_L \nu_{ij} &= \Delta \nu_{ij}+2\Rm_{kijl}\nu_{kl}-\Ric_{ik}\nu_{jk}-\Ric_{jk}\nu_{ik},\\
	\widehat{\Rm}\,(\nu)_{ij} &=  2\Rm_{kijl}\nu_{kl}-\Ric_{ik}\nu_{jk}-\Ric_{jk}\nu_{ik}. 
\end{align*} 
As $\nu$ is homogeneous, all spacial derivatives vanish.  

\begin{flushleft}
	\textbf{Claim}. If $g_{\mathbb{N}}$ is non-flat then $\nu$ is a multiple of $g_{\mathbb{N}}$.
\end{flushleft}

\textit{Proof of the claim}. Since $g_t$ is isomorphic to a space form, $\Ric$ is a multiple of the metric. Thus, $\Ric$, when considered as a linear map on the tangent space, is a multiple of the identity for each $t$. Thus, so is its derivative. If $\Rm(g_{\mathbb{N}})\neq 0$ then $\widehat{\Rm}(\nu)$ is a linear combination of a non-trivial multiple of $\nu$ and a multiple of the identity. The result then follows. 

 Thus, if $\Rm(g_{\mathbb{N}})\neq 0$  there is a local diffeomorphism $\phi: I \times \mathbb{N} \mapsto U,$ an open neighborhood in $M$, such that 
$$
\phi^*(g)=\phi^\ast(d t^2+g_t)=d t^2+F^2(t) \pi^* g_{\mathbb{N}}. 
$$    
The result then follows. \end{proof}
\begin{remark}
	The case that $g_{\mathbb{N}}$ is flat means each level set is an Euclidean space. Their analysis will be carried out in the Appendix. 
\end{remark}

Next, we will apply Theorem \ref{main1} to prove Theorem \ref{main2}.
\begin{proof}[Proof of Theorem \ref{main2}]  Since $M$ is locally irreducible, by Lemma \ref{l2.17}, $\nabla (Xf)\equiv 0$ for any Killing vector field $X\in \mathfrak{iso}(M, g)$. We then consider two possible cases.
	
\textbf{Case 1}: $\lambda \neq 0.$ By Prop. \ref{p2}, $Xf=0$. That is, each Killing vector field automatically preserves $f$. Thus, $\mathfrak{iso}(M, g)\equiv \mathfrak{iso}_f(M, g, f)$ and the result then follows from Theorem \ref{main1}. 

\textbf{Case 2}: $\lambda=0.$ If the scalar curvature $S$ of $(M, g, f)$ is a constant then from \eqref{p2.3}, we obtain $\Ric \equiv 0,$ and hence $(M, g, f)$ is Ricci-flat, which is a contradiction to our non-triviality assumption. Thus, $S$ is non-constant and one observes that it is is invariant under isometries. Hence
\begin{align}
	\mathfrak{iso}_S(M, g, f):= \{X \text{ is a smooth tangent vector field on $M$}, \mathcal{L}_X g=0=\mathcal{L}_X S\}= \mathfrak{iso}(M, g). \nonumber
\end{align}
Repeating the argument as in the proof of Theorem \ref{main1} we have, for $M_c$ a regular level set of $S$,  
\begin{align}
\dim \mathfrak{iso}(M, g)=\dim\mathfrak{i s o}_S(M, g, f) \leq \dim\mathfrak{i s o}\left(M_c, g_c\right) \leq \frac{1}{2}(n-1) n. \nonumber
\end{align}
If the equality happens then, by Theorem \ref{t1}, each regular connected component of $S$ with the induced metric must be of constant curvature. Furthermore, $|\nabla S|$ is also invariant by the isometric action and the rest is verbatim as in the proof of Theorem \ref{main1}. \end{proof}

\begin{proof}[Proof of Corollary \ref{main3}]
 First, by Theorem \ref{main2}, $\dim\mathfrak{iso}(M, g)\leq \frac{1}{2}(n-1)n$  and equality happens only if each connected component of a regular level set of $f$ is a space form $(\mathbb{N}^{n-1}, g_{\mathbb{N}}).$  We now suppose that $\dim\mathfrak{iso}(M, g)= \frac{1}{2}(n-1)n$. Consequently, each regular level set is homogeneous and complete and, consequently, $\mathfrak{iso}(M, g)$ is the Lie algebra of the isometry group on each regular connected component. We'll divide the rest of the proof into cases.
 
\textbf{Case 1}: $\lambda>0.$ By Proposition \ref{po2.15} each regular connected component is compact. Then, by Theorem \ref{t1}, the model space $(\mathbb{N}, g_{\mathbb{N}})$ must be spherical (round sphere or the real projective space). Then, from Theorem \ref{main2}, the Riemannian metric is a local warped product
$$
g= dt^2+ F^2(t)g_{\mathbb{N}}.$$
 By \cite[Theorem 1]{Vas}, the Weyl tensor is vanishing and $(M, g)$ is locally conformally flat. The classification of a GRS with such property for $\lambda>0$ is well-known.  By \cite[Theorem 1]{BKo} and \cite[Theorem 2]{Zh} (or see also \cite[Proposition 2.4]{CaM}), $(M, g, f)$ must be either the Gaussian shrinking gradient Ricci soliton on $\mathbb{R}^n$, the round cylinder shrinker on $\mathbb{S}^{n-1} \times \mathbb{R}$, or the round sphere shrinker on $\mathbb{S}^n.$ They are all rigid. 
	
\textbf{Case 2}: $\lambda=0.$ If the metric $g_{\mathbb{N}}$ is flat, that is $(M^n, g, f)$ $(n \geq 3)$ is a steady gradient Ricci solition with local Euclidean level sets then by Proposition \ref{p4.2} in the Appendix, the metric $g$ is incomplete. This shows that the metric $g_{\mathbb{N}}$ is non-flat. Then by Theorem \ref{main2} and \cite[Theorem 1]{Vas}, $(M, g)$ is locally conformally flat. According to \cite[Theorem 2]{C3}, $(M, g, f)$ is either the Gaussian soliton or isometric to the Bryant soliton. \end{proof}

\begin{proof}[Proof of Corollary \ref{main4}] The proof is by contradiction. Suppose that $d>\frac{1}{2}(n-1)(n-2)+1$. The group of isometries on $(M, g)$ generates a Lie algebra of complete Killing vector fields which is a sub-algebra of $\mathfrak{iso}(M, g)$. From the proofs of Theorem \ref{main1} and \ref{main2}, there is an injective map from $\mathfrak{iso}(M, g)$ to that of a co-dimension one regular submanifold $(M_c, g_c)$. Furthermore, the completeness of a vector field is preserved under the map. Thus, for each regular connected component, \[\text{dim}(\I (M_c, g_c))> \frac{1}{2}(n-1)(n-2)+1.\]
By \cite[Theorem 3.2]{Ko3}, $\text{dim}(\I (M_c, g_c))=\frac{1}{2}n(n-1)$ and each $(M_c, g_c)$ is a space form which is homogeneous and complete. Thus, by continuity, we go back to the case of Theorem \ref{main2} and $d= \frac{1}{2}n(n-1)$, a contradiction. 
\end{proof}

\section{Appendix}

\quad In this Appendix we consider the case of each level set of a GRS is Euclidean, which was mentioned in the proof of Theorem \ref{main1}. We first adapt the gradient Ricci soliton equation \eqref{e1.2} to the cohomogeneity one setting, essentially using the methodology and notation of \cite{DaW}.

Let $G$ be a Lie group acting isometrically on
a Riemannian manifold $(M, g).$ The action is of
\textit{cohomogeneity one} if the orbit space $M/G$ is one-dimensional. In this case, we choose a unit
speed geodesic $\gamma(t)$ that intersects all principal orbits perpendicularly. Then, it is possible
to define a $G$-equivariant diffeomorphism
$
\Phi: I \times P \mapsto M_0$
given by $\Phi(t, h K)=h \cdot \gamma(t) .
$
Here, $M_0 \subset M$ is an open dense subset, $I$ is an open interval;  $P=G / K$ where $K$ is the istropy group along $\gamma(t)$. Then the pullback metric  is
of the form
$$
\Phi^*(g)=d t^2+g_t
$$
where $g_t$ is a one-parameter family of $G$-invariant metrics on $P$. We let $L$ denote the shape operator
$L(X)=\nabla_X N,$
where $N=\Phi_*\left({\partial_t}\right)$ is a unit normal vector field. We will consider $L_t=L_{\mid \Phi(t \times P)}$ to be a one-parameter family of endormorphisms on $T P$ via identification $T(\Phi(t \times P))=T P$. Following \cite{DaW}, we have
$
\partial_t g=2 g_t \circ L_t,
$
that is for $X, Y \in T P$,
$$
\left(\partial_t g\right)(X, Y)=2 g_t\left(L_t(X), Y\right).
$$
From Gauss, Codazzi, and Riccati equations, we find that the Ricci curvature of $(M_0, g)$ is totally determined by the geometry of the shape operator and how it evolves. Moreover, if the function $f$ is invariant by the group action, then
the gradient Ricci soliton equation \eqref{e1.2} is reduced to
\begin{align}\label{e2.4}
	0 & =-(\delta L)-\nabla \operatorname{tr} L, \nonumber\\
	\lambda & =-\tr\left(L^{\prime}\right)-\tr\left(L^2\right)+f^{\prime \prime}, \\
	\lambda g_t(X, Y) & =\Ric_t(X, Y)-(\tr L) g_t(L (X), Y)-g_t\left(L^{\prime}(X), Y\right)+f^{\prime} g_t(L (X), Y).\nonumber
\end{align}
where $\Ric_t$ denotes the Ricci curvature of $\left(P, g_t\right), \delta L=\sum_i \nabla_{e_i} L\left(e_i\right)$ for an orthonormal basis and $\operatorname{tr} T=\operatorname{tr}_{g_t} T_t$.

Now, we consider a GRS $(M^n, g, f)$ $(n \geq 3)$  with local Euclidean level sets:
\begin{align}\label{e4.2}
g:=d t^2+g_t=d t^2+\sum_i h_i^2(t) d x_i^2,
\end{align}
where each function $h_i$ is smooth. Observe that
$$
2 g_t {\circ} L_t=\partial_t g_t=2 \sum_i \frac{h_i^{\prime}}{h_i} h_i^2 d x_i^2.
$$
From this, we get 
$$
L_t\left(\partial_i\right)=\frac{h_i^{\prime}}{h_i} \partial_i, \quad L_t^2\left(\partial_i\right)=\left(\frac{h_i^{\prime}}{h_i}\right)^2 \partial_i,
$$
and
$$
L_i^{\prime}\left(\partial_i\right)=\left(\frac{h_i^{\prime \prime}}{h_i}-\left(\frac{h_i^{\prime}}{h_i}\right)^2\right) \partial_i.
$$
Consequently, 
\begin{align*}
	\tr{L_t} &=\sum_i \frac{h_i'}{h_i},\\
	\tr{L^2_t} &=\sum_i \left(\frac{h_i'}{h_i}\right)^2,\\
	\tr{L'_t} &=\sum_i\left(\frac{h_i^{\prime \prime}}{h_i}-\left(\frac{h_i^{\prime}}{h_i}\right)^2\right).
\end{align*}
Since the shape operator $L$  satisfies the Riccati equation \cite[page 117]{EW}, the sectional curvature of the $2$-plane section spanned by $e_i=\frac{\partial_i}{h_i}$ and $N$ is given by
$$
\begin{aligned}
	K\left(e_i, N\right) & =g\, {\circ}\left(-L^{\prime}-L^2\right)\left(e_i, e_i\right) \\
	& =-g\left(L^{\prime}\left(e_i\right), e_i\right)-g\left(L^2\left(e_i\right), e_i\right) \\
	& =-g\left(\left(\frac{h_i^{\prime \prime}}{h_i}-\left(\frac{h_i^{\prime}}{h_i}\right)^2\right) e_i, e_i\right)-g\left(\left(\frac{h_i^{\prime}}{h_i}\right)^2 e_i, e_i\right)=-\frac{h_i^{\prime \prime}}{h_i}.
\end{aligned}
$$
Using the Gauss equation \cite[Theorem 3.2.4]{Pete3}, we see that  the sectional curvature of the $2$-plane section spanned by $e_i$ and $e_j$ is given by
$$
K\left(e_i, e_j\right)=-g\left(L\left(e_i\right), e_i\right) g\left(L\left(e_j\right), e_j\right)=-\frac{h_i^{\prime}}{h_i} \frac{h_j^{\prime}}{h_j} .
$$
The Ricci curvature is then given by 
$$
\Ric(N, N)=\sum_i K\left(e_i, N\right)=-\sum_i \frac{h_i^{\prime \prime}}{h_i},
$$
and
$$
\begin{aligned}
	\operatorname{Ric}\left(e_j, e_j\right) & =K\left(e_j, N\right)+\sum_{i \neq j} K\left(e_j, e_i\right) \\
	& =-\frac{h_j^{\prime \prime}}{h_j}-\left(\sum_{i \neq j} \frac{h_i^{\prime}}{h_i}\right) \frac{h_j^{\prime}}{h_j}=-\frac{h_j^{\prime \prime}}{h_j}-\left(\sum_i \frac{h_i^{\prime}}{h_i}-\frac{h_j^{\prime}}{h_j}\right) \frac{h_j^{\prime}}{h_j} \\
	& =-\left(\sum_i \frac{h_i^{\prime}}{h_i}\right) \frac{h_j^{\prime}}{h_j}-\left(\frac{h_j^{\prime \prime}}{h_j}-\left(\frac{h_j^{\prime}}{h_j}\right)^2\right).
\end{aligned}
$$
From these results, we imply that the Scalar curvature is given by 
$$
\begin{aligned}
	S & =\Ric(N, N)+\sum_j \Ric\left(e_j, e_j\right) \\
	& =-\sum_i \frac{h_i^{\prime \prime}}{h_i}-\left(\sum_i \frac{h_i^{\prime}}{h_i}\right)\left(\sum_j \frac{h_j^{\prime}}{h_j}\right)-\sum_j\left(\frac{h_j^{\prime \prime}}{h_j}-\left(\frac{h_j^{\prime}}{h_j}\right)^2\right) \\
	&=-2 \sum_i \frac{h_i^{\prime \prime}}{h_i}-A^2+B,
\end{aligned}
$$
where $A:=\sum_i \frac{h_i^{\prime}}{h_i}, B:=\sum_i\left(\frac{h_i^{\prime}}{h_i}\right)^2.$ Thus, generically, the Weyl tensor is NOT vanishing. 

Plugging the above results in \eqref{e2.4}, we conclude that
$$
\begin{aligned}
	\lambda & =-\sum_i\left(\frac{h_i^{\prime \prime}}{h_i}-\left(\frac{h_i^{\prime}}{h_i}\right)^2\right)-\sum_i\left(\frac{h_i^{\prime}}{h_i}\right)^2+f^{\prime \prime}\\
	& =\left(f^{\prime}-\sum_i \frac{h_i^{\prime}}{h_i}\right) \frac{h_j^{\prime}}{h_j}-\left(\frac{h_j^{\prime \prime}}{h_j}-\left(\frac{h_j^{\prime}}{h_j}\right)^2\right) .
\end{aligned}
$$
Let $u_0:= f'$ and $u_i:=\frac{h_i'}{h_i}$, the above system can be written as follows
\begin{align}\label{e4.3}
\left\{\begin{array}{l}
	A=\Sigma_i u_i \\
	B=\Sigma_i u_i^2 \\
	u_j^{\prime}=\left(u_0-A\right) u_j-\lambda \\
	u_0^{\prime}=B+\left(u_0-A\right) A-(n-1) \lambda.
\end{array}\right.
\end{align}
This is a system of first order ODEs and the Picard-Lindel\"{o}f theorem yields local existence and uniqueness. 

		Globally, we suspect there is no complete solution as $u_0$ might blow up in a finite time. 
\begin{proposition}\label{p4.2}
	Let $(M^n, g, f)\, \left(n\geq 3\right)$ be a steady gradient Ricci soliton with local Euclidean level sets \eqref{e4.2}. Then the metric $g$ is incomplete. 
\end{proposition}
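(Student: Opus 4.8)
The plan is to pass to the cohomogeneity‑one reduction, extract an autonomous scalar Riccati equation for the combination $v:=f'-\sum_i h_i'/h_i$, deduce that the geodesic parameter $t$ ranges over an interval with a \emph{finite} endpoint at which the curvature blows up, and conclude incompleteness. Concretely, I would work on the open dense set $M_0\cong I\times P$ where $g=dt^2+g_t$ with $g_t=\sum_i h_i^2(t)\,dx_i^2$ as in \eqref{e4.2}, taking $I=(t_-,t_+)$ to be the \emph{maximal} interval over which this smooth structure persists (equivalently, the maximal interval of the ODE solution, since $h_i>0$ there); along it, $t\mapsto(t,p_0)$ is a unit‑speed geodesic of $(M,g)$. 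Since $\lambda=0$, the soliton equations are the system \eqref{e4.3}: writing $u_0=f'$, $u_i=h_i'/h_i$, $A=\sum_i u_i$, $B=\sum_i u_i^2$, one has $u_i'=(u_0-A)u_i$, $A'=(u_0-A)A$ and $u_0'=B+(u_0-A)A$.

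The key step is to set $v:=u_0-A$ and observe $v'=u_0'-A'=B\ge 0$, and then, using $u_i'=vu_i$,
\[
v''=B'=2\sum_i u_iu_i'=2v\sum_i u_i^2=2vv',
\]
so $(v'-v^2)'=0$, i.e. $v'=v^2+C_0$ for a constant $C_0$. Because the soliton is non‑trivial (if all $h_i$ were constant then $g$ would be flat, hence Einstein), the $u_i$ are not all zero, so $B\not\equiv 0$; as $u_i'=vu_i$ is linear, uniqueness forces $B>0$ everywhere, so $v'=v^2+C_0>0$ and $v$ is a non‑constant, strictly increasing solution of this Riccati equation. A short phase‑line/separation‑of‑variables analysis then shows every such solution reaches $\pm\infty$ at a finite value of $t$; hence $I$ has a finite endpoint $t_0$ with $|v(t)|\to\infty$, and therefore $B=v'=v^2+C_0\to\infty$, as $t\to t_0$.

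Finally I would convert this into geometric incompleteness. From $u_i'=vu_i$ the ratios $u_i/u_j$ are constant, so $u_i=c_iw$ for constants $c_i$ (not all zero) and some $w$ with $w'=vw$; then $B=(\sum_i c_i^2)\,w^2$, so $w^2\to\infty$ as $t\to t_0$. Using local irreducibility of $M$ — the setting in which this proposition is invoked — at least two of the $c_i$ are nonzero, since otherwise $g=dt^2+h_1^2dx_1^2$ times a flat factor is a local Riemannian product; for such $i\ne j$ the Gauss relation recorded in the text gives $K(e_i,e_j)=-\tfrac{h_i'}{h_i}\tfrac{h_j'}{h_j}=-c_ic_j\,w^2$, which is unbounded as $t\to t_0$, hence $|\Rm|\to\infty$ along the geodesic $t\mapsto(t,p_0)$. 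Since that geodesic reaches the finite parameter $t_0$ in finite length, while $|\Rm|$ is continuous on $M$, it cannot be extended inside $M$; by Hopf–Rinow, $(M,g)$ is incomplete.

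The hard part is this last step. The divergence $h_i'/h_i\to\infty$ alone does not give incompleteness — it is exactly the benign behaviour near a smooth singular orbit (as with $1/r$ near the origin in polar coordinates) — so one must genuinely produce a curvature singularity, which is why both the identity $v'=v^2+C_0$ and the local irreducibility (to guarantee two non‑constant $h_i$) are needed. Indeed the irreducibility cannot be dropped: the product of Hamilton's cigar with $\mathbb R^{\,n-2}$ is a complete non‑trivial steady gradient Ricci soliton all of whose $f$‑level sets are locally Euclidean.
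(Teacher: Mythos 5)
Your ODE reduction is the same as the paper's: both arrive at the scalar Riccati equation for $v=l=u_0-A$ (you via the identity $v''=B'=2vv'$, the paper via the substitution $u_j=a_jh$ and the first integral $bh^2=l^2+C$, followed by explicit integration in three cases), and both locate a finite endpoint $t_0$ of the maximal interval at which the solution blows up. Where you genuinely diverge is the endgame, and your version is the more robust one. The paper concludes incompleteness from the assertion that $u_0=f'$ blows up as $t\to t_0$; but $u_0=l\pm \frac{a}{\sqrt b}\,(\cdot)$ is a difference of two singular terms, and in its Case 3 these can cancel --- your example of the cigar times $\mathbb{R}^{n-2}$ (where $h_1=\tanh t$, $v=-2\coth 2t\to-\infty$ but $u_0=-2\tanh t\to 0$) is exactly such a cancellation, and it is a complete non-trivial steady soliton of the form \eqref{e4.2}. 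So the proposition as literally stated needs the (local) irreducibility under which it is invoked in Corollary \ref{main3}, and your argument supplies the missing step: irreducibility forces at least two of the $c_i$ to be nonzero, whence $K(e_i,e_j)=-c_ic_j w^2$ genuinely diverges along the unit-speed geodesic $t\mapsto(t,p_0)$ as $t\to t_0$, and continuity of $|\Rm|$ on $M$ plus Hopf--Rinow gives incompleteness. What the paper's route buys is the explicit solution formulas (useful elsewhere); what yours buys is a correct and conceptually cleaner criterion that isolates precisely why the cigar-product escapes the conclusion. The one point worth tightening is the identification of the interval $I$ on which the warped structure \eqref{e4.2} persists with the maximal interval of the ODE: you should note that at any interior point of the ODE's maximal interval all $h_i$ remain positive and finite, so the orbit cannot degenerate there and the structure indeed extends up to $t_0$; with that remark your argument is complete.
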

\begin{proof}[Proof of Proposition \ref{p4.2}]
For $\lambda=0$, from the system \eqref{e4.3}, we get 
\begin{align}\label{q4.4}
\left\{\begin{array}{l}
	u_j^{\prime}=\left(u_0-A\right) u_j \\
	u_0^{\prime}=B+\left(u_0-A\right) A.
\end{array}\right.
\end{align}
Note that 
$$
A^{\prime}=\sum_i u_i^{\prime}=\sum_i\left(u_0-A\right) u_i=\left(u_0-A\right) \sum_i u_i=\left(u_0-A\right) A.
$$
We rewrite the system \eqref{q4.4} as
\begin{align}\label{e4.5}
\left\{\begin{aligned}
	u_j^{\prime} & =\left(u_0-A\right) u_j \\
	\left(u_0-A\right)^{\prime} & =B.
\end{aligned}\right.
\end{align}
By the first equation of the above system, we obtain
$
\frac{u_j^{\prime}}{u_j}=\frac{u_i^{\prime}}{u_i}
$
for all $j, i$. This implies that there is a smooth function $h$ such that 
$
\frac{u_j^{\prime}}{u_j}=\frac{h^{\prime}}{h}
$ for all $j.$ Then, we have
$$
\left(\frac{u_j}{h}\right)^{\prime}=\frac{u_j^{\prime} h-u_j h^{\prime}}{h^2}=0 .
$$
Thus $u_j=a_j h$ for some constant $a_j.$ From this and \eqref{e4.5}, one finds that
\begin{align}\label{e4.6}
\left\{\begin{aligned}
	h^{\prime} & =l h \\
l^{\prime} & =b h^2,
\end{aligned}\right. 
\end{align}
where $l=u_0-ah, a=\sum_i a_i, b=\sum_i a^2_i.$ One can notice that
$$
\frac{{\rm d}l}{{\rm d} h}=\frac{{\rm d} l}{{\rm d} t} \frac{{\rm d} t}{{\rm d} h}=\frac{b}{l} h.
$$
This implies that 
$$
\int l {\rm d} l=\int b h {\rm d} h.
$$
Consequently,
$$
b h^2=l^2+C
$$
for some constant $C.$ This and \eqref{e4.6} lead to
\begin{align}\label{4.7}
l^{\prime}=l^2+C.
\end{align}

Now, we consider three possible cases.\\
\textbf{Case 1}: $C=0.$ Then the equation \eqref{4.7} becomes $l'=l^2.$ Using this, we find that 
$$
l(t)=-\frac{1}{t+C_1}, \quad h(t)= \pm \frac{1}{\sqrt{b}\left(t+C_1\right)},
$$
and 
$$
u_0(t)=l(t)+a h(t)=-\frac{1}{t+C_1} \pm \frac{a}{\sqrt{b}\left(t+C_1\right)},
$$
for some constant $C_1.$\\
\textbf{Case 2}: $C>0.$ Then we set $C=D^2$ for some constant $D$ and the equation \eqref{4.7} becomes $l'=l^2+D^2.$ Thus, we have 
$$
l(t)=D \tan \left(D t+D_1\right), \quad h(t)= \pm \frac{D}{\sqrt{b} \cos \left(D t+D_1\right)},
$$
and 
$$
u_0(t)=l(t)+a h(t)=D \tan \left(D t+D_1\right) \pm \frac{a D}{\sqrt{b} \cos \left(D t+D_1\right)},
$$
for some constant $D_1.$\\
\textbf{Case 3}: $C<0.$ Then we set $C=-D^2$ for some constant $D$ and the equation \eqref{4.7} becomes $l'=l^2-D^2.$ From this, we get
$$
l(t)=\frac{D\left(e^{2 D t}+D_1\right)}{e^{2 D t}-D_1}, \quad h(t)= \pm \frac{4 D_1 D^2 e^{2 D t}}{\sqrt{b}\left(e^{2 D t}-D_1\right)},
$$
and 
$$
u_0(t)=l(t)+a h(t)=\frac{D\left(e^{2 D t}+D_1\right)}{e^{2 D t}-D_1} \pm \frac{4 a D_1 D^2 e^{2 D t}}{\sqrt{b}\left(e^{2 D t}-D_1\right)}
$$
for some constant $D_1>0.$

From the above results, we see that the function $u_0$ blows up as $t$ approaches a finite time. Then the metric $g$ is incomplete. 
\end{proof}
\section*{Acknowledgment } 

The first author was supported by Vietnam National Foundation for Science and Technology Development (NAFOSTED) under grant number 101.02-2021.28. The second author's research is partially supported by grants from the National Science Foundation [DMS-2104988] and the Vietnam Institute for Advanced Study in Mathematics.

\fontsize{12}{12}\selectfont
	
	\bigskip
\address{{\it Ha Tuan Dung}\\
Faculty of Mathematics \\
Hanoi Pedagogical University 2  \\
Xuan Hoa, Vinh Phuc, Vietnam 
}
{hatuandung@hpu2.edu.vn}
\address{ {\it Hung Tran}\\
	Department of Mathematics\\
	and Statistics \\
Texas Tech University, Lubbock, TX 79413, USA 
}
{hung.tran@ttu.edu}

%

\end{document}